\newtheorem{theorem}{Theorem}[section]
\newtheorem{corollary}[theorem]{Corollary}
\newtheorem{lemma}[theorem]{Lemma}
\newtheorem{assumption}{Assumption}
\newtheorem{fact}[theorem]{Fact}
\theoremstyle{definition}
\newtheorem{definition}{Definition}[section]
\newtheorem{example}{Example}[section]
\newcommand{\newreptheorem}[2]{%
\newenvironment{rep#1}[1]{%
 \def\rep@title{#2~\ref{##1}}%
 \begin{rep@theorem}}%
 {\end{rep@theorem}}}
\newcommand{\norm}[1]{\left\lVert#1\right\rVert}
\let\top\intercal
\title{Accelerated Mirror Descent for Non-Euclidean Star-convex Functions}
\begin{document}

\author{Clement Lezane, Sophie Langer, Wouter M. Koolen}
\maketitle

\begin{abstract}
Acceleration for non-convex functions is a fundamental challenge in optimisation. We revisit star-convex functions, which are
strictly unimodal on all lines through a minimizer.
\cite{pmlr-v125-hinder20a} accelerate unconstrained star-convex minimization of functions that are smooth with respect to the Euclidean norm. To do so, they add a certain binary search step to gradient descent.
In this paper, we accelerate unconstrained star-convex minimization of functions that are \textit{weakly} smooth with respect to an \textit{arbitrary} norm. We add a binary search step to mirror descent, generalize the approach and refine its complexity analysis.
We prove that our algorithms have sharp convergence rates for star-convex functions with $\alpha$-H\"older continuous gradients and demonstrate that our rates are nearly optimal for $p$-norms.
\end{abstract}


\section{Introduction}

Accelerated gradient descent by~\cite{Nesterov1983AMF} has proven to be a powerful tool for first-order optimization, significantly improving algorithmic performance. Nowadays several extensions of accelerated methods exist including coordinate descent methods \cite{coord1, coord2}, distributed gradient descent \cite{Quli16} and proximal methods \cite{prox1} with applications across widespread domains such as image deblurring \cite{BT09}, compressed sensing \cite{Becker11} and deep learning \cite{acc-ex2}.

Theoretical results analysing accelerated methods mainly focus on smooth convex optimisation, demonstrating optimal convergence and improved performance against traditional gradient methods, see~\cite{Nesterov1983AMF}, ~\cite{bubeck2014convex} and \cite{AGJ18} for further references. However, these assumptions are violated in modern machine learning applications due to their non-convex energy landscapes. Attempts to understand acceleration for non-convex functions are made, for instance, by leveraging the Łojasiewicz inequalities \cite{bolte10}. Further classes of non-convex functions are discussed by~\cite{Carmon2017ConvexUP} and~\cite{pmlr-v125-hinder20a}.

\cite{Nesterov2006CubicRO} relax the convexity assumption to so-called star-convex functions, a class of structured non-convex functions that are strictly unimodal on all lines through a minimizer.  While convex functions exhibit unimodality along \textit{all} lines, star-convexity is a weaker condition. \cite{Lee16} provide examples of functions that are star-convex but not convex. Additionally, \cite{Zhou2019SGDCT} and \cite{pmlr-v80-kleinberg18a} present evidence that neural network loss functions often are star-convex (but not convex), emphasizing its relevance for modern machine learning applications.

For star-convex functions,  \cite{pmlr-v125-hinder20a} present an accelerated framework using binary search techniques. Their method is nearly optimal for objective functions having a Lipschitz gradient with respect to the Euclidean norm. However, recent machine learning results such as by~\cite{Yang2015, Adil19} minimize the objective functions in \textit{non}-Euclidean norms, which are not covered by the findings of~\cite{pmlr-v125-hinder20a}. This gap inspires us to extend and generalize their algorithm to arbitrary normed spaces. We combine a generalized ``line-search'' style algorithm \cite{pmlr-v125-hinder20a} with accelerated mirror descent to minimize star-convex functions with $\alpha$-H\"older continuous gradients  (also known as weakly smooth functions, see Definition~\ref{def:weak smooth}). We demonstrate that the general non-Euclidean acceleration framework is not related to the convexity of the objective function, as previously theorized. Rather, a key factor appears to be the inherent regularity characteristic of norms (Fact~\ref{def-regular-norm}). Our finding challenges the common belief that the properties of the objective function are more crucial than those of the norm.

We demonstrate that in case of $p$-norms, our algorithm is nearly optimal for $p>1$. Our approach further extends to the $L_1$ norm though with an additional dimensionality-dependent cost tied to challenges in constructing an appropriate so-called distance generating function. By showing that non-convex functions can be minimized using accelerated algorithms in non-Euclidean geometry, we broaden the conventional understanding of which functions admit acceleration. Our findings suggest that a wider class of non-convex functions can be efficiently minimized, potentially driving further research in this direction.

\textbf{Notation.} Throughout this paper, we use $\|\cdot\|$ to denote an arbitrary norm and write $\|\cdot\|_\ast$ for the associated dual norm. We denote a minimizer of a function $F:\mathbb{R}^d \to \mathbb{R}$ by $x_\star$ and we say that a point is $\epsilon$-optimal if $F(x) \leq F(x_\star)+\epsilon$. We denote the scalar product  by $\langle x,y\rangle=\sum_{i=1}^d x_i y_i$. We say that for two functions $f$ and $g$, $f(x)=\mathcal{O}(g(x))$ if there exist positive real numbers $M$ and $x_0$ such that $|f(x)|\leq Mg(x)$ for all $x \geq x_0$. Further, $f(x)=\Omega(g(x))$ if $\lim\sup_{x \to \infty} \Big|\frac{f(x)}{g(x)}\Big|>0$.

\subsection{Literature review} 

Previous results on star-convex functions primarily focus on non-accelerated algorithms. \cite{ Gower2020SGDFS} and \cite{Hardt2016GradientDL} show convergence guarantees for stochastic gradient descent, \cite{Nesterov2006CubicRO} analyze cubic regularization and \cite{Lee16} investigate a cutting plane method. \cite{joulani17} apply the star-convexity assumption to stochastic and online learning settings, showing regret bounds for first-order methods, leading to a $\mathcal{O}(1/T)$ convergence rate (with $T$ being the number of gradient steps) in case that either the objective function or its gradient are Lipschitz.

Acceleration on star-convex functions has been studied by, e.g.,\ \cite{Guminov2018AcceleratedMF} for low-dimensional subspace optimization and by~\cite{Nest19} for an  one-dimensional line search-type of algorithm, both showing convergence rates to an $\epsilon$-optimal point. However, as noted by~\cite{pmlr-v125-hinder20a}, these algorithms can be limited by their potentially high computational costs for minimising general star-convex functions.

\cite{pmlr-v125-hinder20a} propose an accelerated framework using binary search, showing near-optimal performance for $\tau$-star-convex functions (see Definition~\ref{def star-convex}) that are $L$-smooth (see Definition~\ref{def:weak smooth}) with respect to the Euclidean norm in a domain bounded by radius $R$. Similar to~\cite{Guminov2018AcceleratedMF}, their algorithm finds an $\epsilon$-optimal solution in $\mathcal{O}(\epsilon^{-1/2}L^{1/2} \tau R \log(LR^2/\epsilon))$ iterations. For smooth star-convex functions, they also provide a lower bound  of $\Omega(L^{1/2} \tau R \epsilon^{-1/2})$.

While the accelerated framework of~\cite{pmlr-v125-hinder20a}  is limited to star-convex functions that are smooth with respect to the Euclidean norm, Nesterov's accelerated framework \cite{Elster1993ModernMM, NEMIROVSKII198521} for convex functions applies to general normed spaces under a weaker smoothness assumption (Definition~\ref{def:weak smooth} below).

\subsection{Our contributions}
In this paper, we generalize the accelerated framework introduced by~\cite{pmlr-v125-hinder20a} to encompass weakly smooth star-convex functions with respect to a general norm. To address the challenges arising from the non-convexity of the objective function and the non-Euclidean geometry of the norm, we present a new convergence proof by using a regularity property of norms, involving novel arguments for bounded iterates to quantify convergence. Our key contributions are summarized as follows:

\textbf{Acceleration for arbitrary norms.}
To extend the acceleration framework of~\cite{pmlr-v125-hinder20a} beyond Euclidean norms to general norms, we examine regularity properties of the derivative of the squared norm $\|\cdot\|^2$. We show that star-convexity and weak smoothness suffice to apply our generalized accelerated framework. Our results underscore that the key to acceleration lies in the convexity and homogeneity of norms, rather than convexity of the objective function.

\textbf{Extended binary search.} We generalize the binary search algorithm introduced by~\cite{pmlr-v125-hinder20a}, originally designed for functions that are smooth with respect to the Euclidean norms, to encompass objective functions that are weakly smooth  under arbitrary norms. We provide a streamlined convergence proof (Theorem~\ref{thm-binary-search}) for our generalized algorithm and show that the number of oracle calls required for our binary search step is at most logarithmic. A key challenge in this setting is that our iterates may be unbounded, unlike in the Euclidean case. Additionally, we provide a lower bound for the binary search step when applied to smooth  non-star-convex functions. 

\textbf{Nearly optimal performance for $p$-norm.} For $p$-norms, we prove the same convergence rates as shown for convex functions by~\cite{NEMIROVSKII198521}, extending them to star-convex functions. According to the lower bounds by~\cite{GUZMAN20151}, our rates are nearly optimal, up to a logarithmic factor. Table~\ref{our:results} contextualizes our results in the existing literature.

\begin{table}[ht]
\centering
\begin{tabular}{|Sc|Sc|Sc|Sc|}
  \hline
  $p$
  & Bound
  & Convex
  &  $\tau$-star-convex
  \\ [0.5ex]
  \hline\hline
  \multirow{4}{*}{$(1,2]$}
  & Upper & $\mathcal{O}_{\kappa} \left(\big(\frac{L  R^\kappa}{\epsilon}\big)^{\frac{2}{3 \kappa-2}}  \right)$  & $\mathcal{O}_{\kappa} \left(\big(\frac{L \tau^{2} R^\kappa}{\epsilon}\big)^{\frac{2}{3\kappa - 2 }} \log^2 \Big( \frac{L \tau R}{\epsilon} \Big) \right) $\\
  & & \cite{NEMIROVSKII198521} & \textbf{ Our work} (Corollary~\ref{cor-p<2}(i))\\
  \cline{2-4}
  &Lower & \multicolumn{2}{|Sc|}{$\Omega_{\kappa} \left(\big(\frac{L R^\kappa}{\epsilon[\ln d]^{\kappa-1}}\big)^{\frac{2}{3 \kappa-2}}\right)$ }\\
  & & \multicolumn{2}{|Sc|}{\cite{GUZMAN20151}}
  \\
  \hline \hline
  \multirow{4}{*}{$[2,\infty]$}
&Upper  & $\mathcal{O}_{p,\kappa} \left(\big(\frac{L R^\kappa}{\epsilon}\big)^{\frac{p}{\kappa p+\kappa-p}}\right) $  & $\mathcal{O}_{p,\kappa} \left(\big(\frac{L \tau^{p} R^\kappa}{\epsilon}\big)^{\frac{p}{\kappa p+\kappa-p}} \log^2 \Big( \frac{L \tau R}{\epsilon} \Big) \right)$ \\
& & \cite{NEMIROVSKII198521} & \textbf{Our work} (Corollary~\ref{cor-p<2}(ii))\\
\cline{2-4}
  &Lower & \multicolumn{2}{|Sc|}{$\Omega_{p,\kappa} \left(\big(\frac{L R^\kappa}{\epsilon}\big)^{\frac{p}{\kappa p+\kappa-p}} \right)$}
  \\
  & & \multicolumn{2}{|Sc|}{\cite{GUZMAN20151}}\\
 \hline
\end{tabular}
\caption{Summary of the relation between existing work and our results for $\tau$-star-convex functions. $R$ is the radius of the domain of the $\tau$-star-convex objective function, $(L, \kappa)$ is its smoothness with respect to a $p$-norm $\| \cdot \|_p$  with $p>1$ and $\epsilon$ the accuracy of the algorithm. Since convex functions are $1$-star-convex, the lower bounds from~\cite{GUZMAN20151} apply to both. 
} \label{our:results}
\end{table}

\bigskip\noindent
The article is organized as follows. In Section~\ref{sec2}, we outline the problem setting and provide preliminaries. Section~\ref{sec3} presents our generalized accelerated algorithm and discusses its convergence guarantees for various settings. Section~\ref{sec4} introduces our binary search sub-algorithm and its running time. In Section~\ref{sec-application} we put the two ingredients together and obtain our main bound. A discussion of future research directions is given in 
Section~\ref{sec5}. All proofs are provided in the Appendix.

\section{Problem setting and preliminaries}
\label{sec2}
In this paper we consider the unconstrained minimization problem 
\begin{align*}
    \min_{x \in \mathbb{R}^d} F(x)
\end{align*}
with the unknown function $F \in \mathcal{F}$ coming from a given class $\mathcal{F} \subset [\mathbb{R}^d \to \mathbb R]$ of differentiable functions. We briefly discuss constrained domains in Section~\ref{sec5}. We assume a first-order access model, meaning that the algorithm has access to the objective function as well as its gradient. The performance of the algorithm is measured by comparing (after $T$ iterations) the function value of its output against $\min_{x \in \mathbb{R}^d} F(x)$. In the following, we assume that $\mathcal{F}$ describes the class of star-convex, (weakly) smooth functions with respect to a given (not necessarly Euclidean) norm $\|\cdot\|$. The formal definitions for this function class are introduced next.

\subsection{Definitions}

We consider the class of $\tau$-star-convex functions, as introduced by~\cite{Hardt2016GradientDL}, which are defined as follows. 
\begin{definition}[$\tau$-star-convexity] 
\label{def star-convex}
A continuously differentiable function  $F :\mathbb{R}^d  \xrightarrow{} \mathbb{R} $ is said to be \emph{$\tau$-star-convex} for $\tau > 0$ if there exists $x_\star \in \arg \min F $ such that for all $x \in \mathbb{R}^d$
\begin{equation}
\label{admissible}
\tau \langle \nabla F(x),x-x_\star \rangle ~\geq~ F(x) - F(x_\star).
\end{equation}
\end{definition}
Note that, all convex functions are $1$-star-convex \cite{joulani17}. The condition becomes weaker as $\tau$ grows; conversely, if allowed, $0$-star convex function would be identically constant. In our unconstrained optimization setting, we observe that $x_\star$ must be a global minimizer (as the gradient vanishes at global minima).

For first-order methods, it is often assumed that the gradient is bounded, Lipschitz or $\alpha$-H\"older continuous \cite{NY83, Diakonikolas2024}, implying that the objective function satisfies certain smoothness conditions. In the following, we introduce the definition of (weak) smoothness with respect to a norm $\|\cdot\|$ using the notion of Bregman divergence, defined as follows.

\begin{definition}[Bregman divergence]\label{def:bregman}
For a continuously differentiable function $F :\mathbb{R}^d  \xrightarrow{} \mathbb{R} $, the \emph{Bregman divergence} from $x \in \mathbb{R}^d$ to $y \in \mathbb{R}^d$ is defined by
\[ D_F(x,y) := F(x) - F(y) - \langle \nabla F(y),x-y \rangle. \]
\end{definition}
Note that we do not assume convexity of $F$, and consequently we may have $D_F(x,y) < 0$.
\begin{definition}[Weak smoothness]
\label{def:weak smooth}
A continuously differentiable function  $F :\mathbb{R}^d  \xrightarrow{} \mathbb{R} $ is said to be \emph{$(L,\kappa)$-weakly smooth} with respect to norm $\|\cdot \|$ for $1< \kappa \leq 2 $ if for all $x,y \in \mathbb{R}^d$
\[ | D_F(x,y)| \leq \frac{L}{\kappa} \|x-y\|^{\kappa}.\]
The special case $\kappa=2$ is simply referred to as \emph{smooth}.
\end{definition}
 To develop our mirror descent algorithm, we use a so-called distance-generating function (see Assumption~\ref{ass}), which fulfils a strong or uniform convexity assumption with respect to an arbitrary norm, defined as follows.
\begin{definition}[Uniform convexity]
\label{def-uni-convex}
A continuously differentiable function  $F :\mathbb{R}^d  \xrightarrow{} \mathbb{R} $ is said to be \emph{$(\mu,q)$-uniformly convex} with respect to norm $\|\cdot \|$ for $q\geq 2$ if for all $x,y \in \mathbb{R}^d $
\[ D_F(x,y) \geq \frac{\mu}{q} \|x-y\|^{q}.\]
The special case $q=2$ is called \emph{strongly convex}.
\end{definition}
The classic acceleration setting considers $q=\kappa=2$. In general, we have $q > 2 > \kappa$.

\section{Main results}\label{sec3}
Our accelerated optimization algorithm for weakly smooth, star-convex functions incorporates both a mirror descent step and a proximal step, similar to~\cite{AGJ18}. However, as we will see in the proof sketch, our analysis exploits the specific structure of unconstrained optimization.  We start by summarizing our assumptions. 
\begin{assumption}\label{ass}
We have a differentiable function $F : \mathbb R^d \to \mathbb R$ to be optimized, a norm $\norm{\cdot}$ defined on $\mathbb R^d$, a function $\psi : \mathbb R^d \to \mathbb R$ customarily called the distance-generating function, an order $q \ge 2$ and constants $\tau, L, \kappa, \mu$ such that
\begin{itemize}
\item $F$ is $\tau$-star-convex (Definition~\ref{def star-convex}).
\item $F$ is $(L, \kappa)$-weakly smooth w.r.t.\ $\norm{\cdot}$ (Definition~\ref{def:weak smooth}).
\item The
  distance-generating
  function $\psi$ is $(\mu,q)$-uniformly convex w.r.t.\ $\norm{\cdot}$ (Definition~\ref{def-uni-convex}).
\end{itemize}
\end{assumption}


For an arbitrary norm, constructing a distance-generating function that satisfies the uniform convexity assumption of Definition~\ref{def-uni-convex} requires careful consideration. Below we present examples of distance-generating functions for $p$-norms and composite norms, each meeting the required conditions of uniform convexity.

\begin{example}[$p$-norms]\label{p-norm example}
 To construct the corresponding distance-generating function, we define $\psi(x) = \frac{1}{q} \| x \|_p^q$, where the appropriate order $q$ depends on $p$. Specifically, for $p \in (1,2]$, we set $q=2$, ensuring that $\psi$ is $(p-1)$-strongly convex with respect to $\norm{\cdot}_p$ \cite{uniformconvex1,uniformconvex2}. For $p > 2$, we choose $q=p$, ensuring that $\psi$ is $(2^{-\frac{p(p-2)}{p-1}},p)$-uniformly convex with respect to \ $\|\cdot\|_p$ \cite[see][]{Zalinescu:1983,AGL24}.
 \\
 \\
For both cases, we will show sharp convergence guarantees of our algorithm, as summarized in Table~\ref{our:results}.
\end{example}

\begin{example}[Composite norms and distance generating function]
For composite norms, the distance-generating function is constructed by combining the distance-generating functions associated with the individual norms, ensuring the desired uniform convexity property. Consider, for instance, the following composite norm
\begin{equation} \label{ex-composite}
  \|x\|_{2 \circ 1.5}
  ~:=~
  \sqrt{\frac{1}{2} \|x_{1:d/2}\|^2_2+\frac{1}{2}\|x_{(d/2+1):d}\|^2_{1.5}}
  .
\end{equation}
By Example~\ref{p-norm example},
$\psi_1(x_{1:d/2}) = \frac{1}{2} \sum_{i=1}^{d/2} x_i^2$ is $1$-strongly convex with respect to $\|\cdot\|_2$ and 
$\psi_2(x_{d/2+1,d}) = \frac{1}{1.5} (\sum_{i=d/2 + 1}^{d} |x_i|^{1.5})^{2/1.5}$ is $\frac{1}{2}$-strongly convex with respect to $\|\cdot\|_{1.5}$. Hence, we define the composite distance-generating function as $\psi(x) :=  \frac{1}{2} \psi_1(x_{1:d/2}) + \frac{1}{2} \psi_2(x_{d/2+1,d}).$
For any $ x,y \in \mathbb{R}^d$, we denote the Bregman divergence by $D_{\psi}(x,y)$ and we note that
\begin{align*}
 D_{\psi}(x,y)  & = \frac{1}{2} \Big( \psi_1(x_{1:d/2}) - \psi_1(y_{1:d/2}) - \langle \nabla \psi_1(y_{1:d/2}),x_{1:d/2} - y_{1:d/2} \rangle \Big) \\
& \quad + \frac{1}{2} \Big( \psi_2(x_{(d/2+1):d}) - \psi_2(y_{(d/2+1):d}) - \langle \nabla \psi_2(y_{(d/2+1):d}),x_{(d/2+1):d} - y_{(d/2+1):d} \rangle \Big) \\
 & \geq \frac{1}{4} \|x_{1:d/2} - y_{1:d/2} \|_2^2 + \frac{1}{8} \| x_{(d/2+1):d} - y_{(d/2+1):d} \|_{1.5}^2  ~\geq~ \frac{\| x-y \|_{2 \circ 1.5}^2}{4}.
\end{align*}
We use the strong convexity of $\psi_1$ and $\psi_2$ in the last line. Thus, $\psi$ is $\frac{1}{2}$-strongly convex with respect to the composite norm $\| \|_{2 \circ 1.5}$. In general, the strong convexity coefficient  of the composite norm is determined by the weaker of the two individual norms.

\end{example}
Our proposed method, outlined in Algorithm~\ref{algo-acc}, extends the accelerated method of~\cite{pmlr-v125-hinder20a}, which addresses the case of smoothness ($\kappa=2$) for the Euclidean norm $\norm{\cdot}_2$. This norm is known to be strongly convex with respect to itself, with $q=2$ and $\psi(\cdot) = \frac{1}{2}\norm{\cdot}_2^2$.
Similar to their method, ours maintains an auxiliary sequence $(x_t)_{t \ge 1}$ of iterates, from which it derives a sequence $(x_t^{md})_{t \ge 1}$ of gradient query points as well as the output sequence $(x_t^{ag})_{t \ge 1}$ of approximate minimizers. In each iteration, the gradient point $x_t^{md}$ is found by binary search, which we extend from~\cite{pmlr-v125-hinder20a} to handle weak smoothness in Section~\ref{sec4}. While this search is not necessary for convex $F$, it appears to be a reasonable trade-off for the relaxation to star-convexity, see also the discussion in Section~\ref{sec5}. \cite{pmlr-v125-hinder20a} update the iterates $x_t$ and approximate minimizers $x_t^{ag}$ each using gradient descent. However, to address a non-Euclidean norm, we instead employ a mirror descent step with respect to the Bregman divergence $D_\psi$ to update $x_t$, and a proximal step with respect to $\frac{1}{q} \norm{\cdot}^q$ to update $x_t^{ag}$. The accelerated order $q \ge 2$ depends on the norm $\| \cdot \|$ and will asymptotically determine the convergence speed of our algorithm, with a smaller $q$ leading to faster rates.

Our algorithm takes as input the parameter schedules $(C_t, \epsilon_t,\alpha_t,\eta_t)$, with $(\alpha_t,\eta_t)$ being the step sizes and $(C_t, \epsilon_t)$ being the stopping conditions  for the binary search. All tuned parameters are polynomial functions of the horizon $T$, and are given in Theorem \ref{thm-p>2} and Appendix~\ref{convergence proof}. 
\begin{algorithm}
\caption{Accelerated Mirror Descent with Binary Search (for the setting of  Assumption~\ref{ass})}
\label{algo-acc}
\begin{algorithmic}[1]
  \Require
  Starting point $x_1 = x_1^{ag} \in \mathbb{R}^d$, iterations $T \geq 0$, parameters $(C_t, \epsilon_t,\alpha_t,\eta_t)_{t \ge 1}$ \\
\textbf{For} {$ 1 \leq t \leq T$} \textbf{do} \\
\label{stepbinary}
\quad \quad \textit{Binary search} (Alg.~\ref{algo-binary-search}) with endpoints $(x_t,x_t^{ag})$ and parameters $(C_t,\epsilon_t)$ to find $\lambda_t \in [0,1]$, $x_{t}^{md} = \lambda_t x_{t}^{ag} + (1-\lambda_t) x_{t}$ such that $ \langle \nabla F(x_t^{md}),x_t^{md} - x_{t}  \rangle
+  C_t  \Big( F(x_t^{md}) - F(x_t^{ag})  \Big) \le   \epsilon_t $ \\
\quad \quad Compute $ x_{t+1}  = \arg \min_{x \in \mathbb{R}^d} \{ \eta_t \langle \nabla F(x_{t}^{md}), x  \rangle +  D_\psi(x,x_t) \} $ 
\label{stepmid}
\\
\label{stepagg}
\quad \quad Compute $ x_{t+1}^{ag}  = \arg \min_{x \in \mathbb{R}^d}  \{ \alpha_t \langle \nabla F(x_{t}^{md}), x  \rangle +  \frac{\mu}{q} \| x - x_t^{md} \|^q \}   $ \\
\textbf{EndFor} \\
\textbf{Return} $x_{T+1}^{ag}$
\end{algorithmic}
\end{algorithm}

As we will see in Theorem~\ref{thm-binary-search}, the total number of values and gradients queried in each binary search step is $\mathcal{O}(\log T)$. In turn, the total number of gradients queried in $T$ rounds is $\mathcal{O}(T \log T)$. We note that the optimal tuning of step sizes will depend on the number of iterations $T$.


\subsection{Convergence result}
In this section, we present the convergence result of our algorithm. The full proof is provided in Appendix~\ref{convergence proof}, with a proof sketch given in Section~\ref{sketch}.
\begin{theorem}
\label{thm-p>2}
In the setting of Assumption~\ref{ass} with $\kappa < q$, Algorithm~\ref{algo-acc} with the tuning below returns $x_{T+1}^{ag}$ after $T$ iterations with precision
\[ F(x_{T+1}^{ag}) - F_\star
  ~=~
  \mathcal{O}_{q,\kappa} \left(
    \frac{L  \tau^q}{T^{\frac{\kappa q + \kappa - q}{q}  }}
    \left(
    \frac{  D_{\psi}( x_\star ,x_1) }{\alpha L }
    +
    \Big( \frac{\alpha L}{\mu}\Big)^{\frac{\kappa}{q-\kappa}} \log (T)
  \right)
  \right)
\]
where $\mathcal{O}_{q, \kappa}$ omits constants depending only on $(q,\kappa)$. This is achieved for any $\alpha > 0$ by
\[
 \begin{aligned}
\alpha_t & :=    \Big(\tau (q-\beta) \Big)^{q-\kappa} \frac{\alpha}{t^\beta}, \quad \quad   \eta_t := \alpha_t \Big(\frac{t}{\tau(q-\beta)} \Big)^{q-1}.  \\
C_t & :=   \Big( \frac{\eta_t}{\alpha_t}\Big)^{q_{\ast}-1}  - \frac{1}{\tau} = \frac{1}{\tau} \Big( (q-\beta) t - 1 \Big)\\
\epsilon_t & :=  \frac{\alpha^{\frac{q}{q-\kappa}}  }{ t \eta_t }  \frac{M^{1/r} }{ \mu^{1/r}} L ^{1+1/r}
.
\end{aligned}
\qquad
\text{where}
\qquad
\begin{aligned}
\begin{cases}
r & := \frac{q-\kappa}{\kappa} > 0 \\
M & :=   \big( \frac{r}{q} \big)^{r}  \geq 0 \\
\beta & =  q-\kappa + \frac{q-\kappa}{q}
\end{cases}
\end{aligned}
\]
\end{theorem}

If $q$ and $\kappa$ are close to $2$, our rate is close to the classic $T^{-2}$ accelerated rate by~\cite{NEMIROVSKII198521}. The case $q=\kappa=2$ requires special handling, as the above rate would yield an infinite exponent in the last term. We introduce a refined tuning for this case along with a tighter analysis in Appendix~\ref{app:qk2}. 

\textit{Tuning the parameters.} The general step sizes $\alpha_t$ and $\eta_t$ are polynomial in the iteration number $t$ with the main constraint being that their exponent must remain bounded. Specifically, we set $\alpha_t \sim \alpha t^{-\beta}$, where $\alpha > 0$ is a coefficient determined based on the Bregman divergence $D_{\psi}(x_\star,x_1)$. Additionally, $\eta_t$ is chosen as $\eta_t \sim t^{1 - \frac{3 q - \kappa (1+q)}{q}}$, which recovers the classic Lipschitz convex setting $\eta_t \sim 1/\sqrt{t}$ for $q=2,\kappa=1$ and the Nesterov step size of $\eta_t \sim t$ for $q=\kappa=2$. The choice of $C_t \sim t$ follows from the mirror descent analysis, while $\epsilon_t$ is chosen to decrease sufficiently fast such that $\eta_t \epsilon_t \lesssim 1/t$ sums to $\mathcal O(\log T)$. Full details are provided in Appendix~\ref{convergence proof}. 

\textit{Tuning the parameter $\alpha$.} The optimal tuning for $\alpha$ in the preceding result depends on the radius $D_\psi(x_\star, x_1)$, which is unknown a priori. It also depends on $\log T$, while we prefer tuning independent of the time horizon $T$. To avoid both issues, we suggest the following practical tuning.

\begin{corollary}
  \label{corr-p>2}
  If a bound $\frac{1}{\mu} D_{\psi}( x_\star ,x_1) \le  B$ is available, then the tuning of Theorem~\ref{thm-p>2} with
  $\alpha
  ~=~
  \frac{\mu}{L}
  \left(
  \frac{
    (q-\kappa) B
  }{
    \kappa 
  }
\right)^{\frac{q-\kappa}{q}}$
guarantees
    $F(x_{T+1}^{ag}) - F_\star
    ~=~
    \mathcal{O}_{q,\kappa}  \left(
      L
      \tau^q
      B^{\frac{\kappa}{q}}
      \frac{
        \log (T)}{T^{\frac{\kappa q + \kappa - q}{q}  }}
    \right)
    $.
\end{corollary}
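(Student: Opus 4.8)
# Proof Proposal for Corollary~\ref{corr-p>2}

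The plan is to simply substitute the prescribed value of $\alpha$ into the general convergence bound of Theorem~\ref{Proof thm p>2} and verify that the two terms inside the parenthesis become comparable (up to the constants absorbed by $\mathcal{O}_{q,\kappa}$), so that neither dominates wastefully. Recall that Theorem~\ref{Proof thm p>2} gives
\[
F(x_{T+1}^{ag}) - F_\star
~=~
\mathcal{O}_{q,\kappa} \left(
\frac{L C^q \tau^q}{T^{\frac{\kappa q + \kappa - q}{q}}}
\left(
\frac{D^{\psi}(x_\star, x_1)}{\alpha L}
+
C^{q_{\ast}} \left(\frac{\alpha L}{\mu}\right)^{\frac{\kappa}{q-\kappa}} \log(T)
\right)
\right).
\]
Writing $B \ge \frac{1}{\mu} D^{\psi}(x_\star, x_1)$, the first inner term is at most $\frac{\mu B}{\alpha L}$, and the second is $C^{q_{\ast}} (\alpha L / \mu)^{\kappa/(q-\kappa)} \log(T)$. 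The natural choice of $\alpha$ is the one that balances $\frac{\mu B}{\alpha L}$ against $C^{q_{\ast}}(\alpha L/\mu)^{\kappa/(q-\kappa)}$ (ignoring the $\log T$, which we allow to ride along in the final bound). Setting $\frac{\mu B}{\alpha L} = C^{q_{\ast}} (\alpha L/\mu)^{\kappa/(q-\kappa)}$ and solving for $\alpha$ yields $(\alpha L/\mu)^{\frac{q}{q-\kappa}} = B / C^{q_{\ast}}$, i.e. $\alpha = \frac{\mu}{L}\left(B/C^{q_{\ast}}\right)^{\frac{q-\kappa}{q}}$; the extra factor $\left(\frac{q-\kappa}{\kappa}\right)^{\frac{q-\kappa}{q}}$ appearing in the statement is a $(q,\kappa)$-dependent constant that makes the balance exact rather than merely order-correct, and can be carried through explicitly or swept into $\mathcal{O}_{q,\kappa}$.

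With this $\alpha$, the first inner term becomes, after simplification, a $(q,\kappa)$-constant multiple of $\frac{\mu B}{\alpha L} = C^{q_{\ast}} \cdot \left(\frac{\kappa C^{q_{\ast}}}{(q-\kappa)B}\right)^{-\frac{q-\kappa}{q}} \cdot (\ldots)$; more cleanly, plugging $\alpha L/\mu = \left(\frac{(q-\kappa)B}{\kappa C^{q_{\ast}}}\right)^{\frac{q-\kappa}{q}}$ into $\frac{\mu B}{\alpha L}$ gives $B \left(\frac{(q-\kappa)B}{\kappa C^{q_{\ast}}}\right)^{-\frac{q-\kappa}{q}} = \mathcal{O}_{q,\kappa}\!\left(B^{1-\frac{q-\kappa}{q}} (C^{q_{\ast}})^{\frac{q-\kappa}{q}}\right) = \mathcal{O}_{q,\kappa}\!\left(B^{\frac{\kappa}{q}} C^{\frac{q_{\ast}(q-\kappa)}{q}}\right)$, and the second inner term produces the same power of $B$ and $C$ up to the $\log T$ factor. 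Hence the bracketed quantity is $\mathcal{O}_{q,\kappa}\!\left(B^{\kappa/q}\, C^{q_{\ast}(q-\kappa)/q}\log T\right)$, and multiplying by the prefactor $L C^q \tau^q / T^{\frac{\kappa q + \kappa - q}{q}}$ gives an overall $C$-exponent of $q + \frac{q_{\ast}(q-\kappa)}{q}$. It then remains to check the bookkeeping identity $q + \frac{q_{\ast}(q-\kappa)}{q} = \frac{q^2-\kappa}{q-1}$ claimed in the statement: using $q_{\ast} = \frac{q}{q-1}$ one computes $\frac{q_{\ast}(q-\kappa)}{q} = \frac{q-\kappa}{q-1}$, so the exponent is $q + \frac{q-\kappa}{q-1} = \frac{q(q-1) + q - \kappa}{q-1} = \frac{q^2 - \kappa}{q-1}$, as desired.

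The only genuine content beyond arithmetic is confirming that this substitution is \emph{admissible}: Theorem~\ref{Proof thm p>2} permits "any $\alpha > 0$," and the proposed $\alpha$ is manifestly positive whenever $B > 0$ and $C, \mu, L > 0$, so no feasibility obstruction arises. I expect the main obstacle to be purely clerical — keeping the fractional exponents of $B$, $C$, $\mu$, $L$ straight through the substitution and matching them to the compressed exponent $\frac{q^2-\kappa}{q-1}$ in the statement — together with being explicit about which $(q,\kappa)$-dependent constants (notably $\left(\frac{q-\kappa}{\kappa}\right)^{\pm(q-\kappa)/q}$ and the absolute constants already hidden in Theorem~\ref{Proof thm p>2}) are legitimately absorbed into $\mathcal{O}_{q,\kappa}$. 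One should also note in passing that the $\mu/L$ and $B$ dependence of $\alpha$ means the resulting algorithm is no longer parameter-free in $(\mu, L)$, but this is exactly the point of stating it as a practical corollary given a known bound $B$.
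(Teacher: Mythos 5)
Your proposal is correct and is exactly what the paper intends: the corollary has no separate proof in the paper and follows by substituting the stated $\alpha$ into Theorem~\ref{Proof thm p>2}, balancing the two bracketed terms, and checking the exponent identity $q + \frac{q_\ast(q-\kappa)}{q} = \frac{q^2-\kappa}{q-1}$, all of which you carry out correctly. The only implicit caveat worth noting is that the manipulation presumes $q > \kappa$ (so that $r = \frac{q-\kappa}{\kappa} > 0$), which is already part of the theorem's setting, with $q=\kappa=2$ handled by the separate remark after the theorem.
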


In the next sections, we will sketch the proof of Theorem~\ref{thm-p>2} and examine the complexity of binary search in Section \ref{sec4}. After that, we will finish the complexity analysis of our algorithm in Section~\ref{sec-application}.

\subsection{Proof Sketch of Theorem~\ref{thm-p>2} (Convergence rate of Algorithm~\ref{algo-acc})}\label{sketch}
The proof can be decomposed into three steps. As the first step of the proof, we use star-convexity of the objective $F$ to linearize it around the gradient point $x_t^{md}$. Together with the mirror descent iterates $x_{t+1}$ from line~\ref{stepmid} of Algorithm~\ref{algo-acc}, we obtain the upper bound
\begin{equation}
\label{eq-acc-potential}
\begin{aligned}
 \frac{\eta_t}{\tau} \Big( F(x_t^{md}) - F_\star \Big) 
{}\leq{} &  D_{\psi}(x_\star ,x_t) -  D_{\psi}(x_\star,x_{t+1}) +  \eta_t \langle \nabla F(x_t^{md}),x_t^{md} - x_t  \rangle \\
{}+{} & \eta_t \langle \nabla F(x_t^{md}),x_{t} - x_{t+1}  \rangle  - \frac{\mu}{q} \| x_{t+1} - x_{t} \|^q.
\end{aligned}
\end{equation}
As the next step, we consider the unconstrained proximal step in line~\ref{stepagg} of Algorithm~\ref{algo-acc}, i.e.,
\[ x_{t+1}^{ag}  = \arg \min_{x \in \mathbb{R}^d}~  \Big\{ \alpha_t \langle \nabla F(x_{t}^{md}), x  \rangle +  \frac{\mu}{q} \| x - x_t^{md} \|^q \Big\}. \]
This proximal step leads to the following gap between two neighbouring accelerated potentials $\left( F(x_{t}^{md}),F(x_{t+1}^{ag}) \right)$ 
\begin{align*}
 \eta_t \langle  \nabla F(x_t^{md}) , x_{t} - x_{t+1} \rangle  - \frac{\mu}{q} \| x_{t+1} - x_{t} \|^q  \leq   A_t \Big( F(x_t^{md}) - F(x_{t+1}^{ag}) \Big)   +    B_t 
\end{align*}
with parameters $A_t,B_t$ specified in \eqref{notation-proof-p>2}. The term $B_t$ is the classical residual after combining the weak smoothness assumption of $F$ with the inexact gradient-trick \cite{CT93, AGJ18, AGL24}. This step was previously known to hold in the convex case \cite{AGJ18} or in the unconstrained Euclidean case \cite{pmlr-v125-hinder20a} for different reasons. For the non-convex non-Euclidean unconstrained case, one needs to examine carefully the regularity of the norm square (see Appendix~\ref{app:prelim.proofs}).

After computing the new accelerated point $x_{t+1}^{ag}$ from the middle point $x_t$, the previous objective bound \eqref{eq-acc-potential} can be manipulated into an upper bound on the potential gap between two iterations $\left( F(x_{t}^{ag}),F(x_{t+1}^{ag}) \right)$. 
\begin{equation}
\begin{aligned}
 A_t   \Big( F(x_{t+1}^{ag}) - F_\star \Big) & \leq   D_{\psi}(x_\star ,x_t) -  D_{\psi}(x_\star,x_{t+1}) + \eta_t \langle \nabla F(x_t^{md}),x_t^{md} - x_{t}  \rangle \\
& +  \Big(   A_t  -  \frac{\eta_t}{\tau} \Big) \Big( F(x_t^{md}) - F(x_t^{ag})  \Big)   +  \Big(   A_t  -  \frac{\eta_t}{\tau} \Big) \Big( F(x_t^{ag}) - F_\star  \Big)  +   B_t.
\end{aligned}
\end{equation}
As the last step, the choice of $\lambda_t$ in line~\ref{stepbinary} of Algorithm~\ref{algo-acc} ensures that
\begin{align*}
\eta_t \langle \nabla F(x_t^{md}),x_t^{md} - x_{t}  \rangle  +   \Big(   A_t  -  \frac{\eta_t}{\tau} \Big) \Big( F(x_t^{md}) - F(x_t^{ag})  \Big) \leq \eta_t \epsilon_t. 
\end{align*}
By showing that our tuning satisfies $A_{t} -A_{t-1} \leq \frac{\eta_t}{\tau}$, $ \eta_t \epsilon_t = \frac{\mu}{t}$ and $B_t \approx \frac{1}{t}$, we get 
\begin{equation}
\begin{aligned}
 A_t   \Big( F(x_{t+1}^{ag}) - F_\star \Big) \leq   D_{\psi}(x_\star ,x_t) -  D_{\psi}(x_\star,x_{t+1}) +  A_{t-1} \Big( F(x_t^{ag}) - F_\star  \Big)  + O(1/t),
\end{aligned}
\end{equation}
which telescopes to our result. To obtain the desired accelerated convergence rate in Table~\ref{our:results}, we still need to account for the complexity of finding $\lambda_t$. In Section~\ref{sec4}, we show that this additional cost remains only a logarithmic factor, even in the non-Euclidean case. 

\section{Binary search for H\"older smooth functions}
\label{sec4}
 In each round $t$, Algorithm~\ref{algo-acc} performs binary search to find the momentum parameter $\lambda_t$ that satisfies
\begin{equation}
\label{bin_cond}
\lambda_t g'(\lambda_t)+ C_t g(\lambda_t) \leq \epsilon_t, 
\end{equation}
where we use the abbreviation $g(\lambda) \triangleq F(\lambda x_t^{ag}+(1-\lambda)x_t) - F(x_t^{ag}) $ so that in particular $g(1) = 0$. The condition~\eqref{bin_cond} is crucial for the convergence proof of Theorem~\ref{thm-p>2}. The core idea behind our Algorithm~\ref{algo-binary-search} is inspired by Algorithm 2 of~\cite{pmlr-v125-hinder20a}. We aim to efficiently find a $\lambda_t$ that satisfies \eqref{bin_cond} in the more general setting where $F$ is weakly smooth with respect to a possibly non-Euclidean norm. 

Note that if $g(0) \leq 0$ or $g$ is decreasing at $\lambda=1$, then \eqref{bin_cond} is immediately satisfied. If neither of these conditions hold, then $g(0)>g(1)$ and $g'(1)>0$, indicating that $g$ must switch at least once from decreasing to increasing. For the largest $\lambda_*$ fulfilling $g'(\lambda_*)=0$ it holds that $g(\lambda_*) < 0$. We further show in the proof of Theorem~\ref{thm-binary-search} that all $\lambda \in [\lambda_*-\delta, \lambda_*]$, with a properly chosen $\delta$, must satisfy condition \eqref{bin_cond}. The efficiency of our algorithm in finding any $\lambda$ in that interval is stated in Theorem~\ref{thm-binary-search} below.

\begin{algorithm}[h!]
  \caption{Generalized binary search}
  \label{algo-binary-search}
\begin{algorithmic}
\Require Objective function $F$, endpoints $(x_t, x_t^{ag})$, parameters $(C_t, \epsilon_t) \geq 0 $\\
Define $g(\lambda) \triangleq F(\lambda x_t^{ag}+(1-\lambda)x_t) - F(x_t^{ag})$ \\
\textbf{if} $ g'(1) \leq \epsilon_t$ \\
\quad \quad \textbf{Return} $(\lambda_t =1, x_{t}^{md} = x_t^{ag})$ \\
\textbf{else if} $C_t g(0) \leq \epsilon_t $ \\
\quad \quad \textbf{Return}  $(\lambda_t =0, x_{t}^{md} = x_t)$\\
Initialize $ [a,b] = [0,1]$ \\
\textbf{loop} \\
\quad \quad Set $\lambda_t = \frac{a+b}{2}$ \\
\quad \quad \textbf{if} $ g'(\lambda_t)+  C_t  g(\lambda_t)  \leq   \epsilon_t $ \\
\quad \quad \quad \quad \textbf{Return}$(\lambda_t, x_{t}^{md} = \lambda_t x^{ag}_{t} + (1-\lambda_t) x_{t})$ \\
\quad \quad \textbf{else if} $  g(\lambda_t) > 0 $ \\
\quad \quad \quad \quad $a = \lambda_t$ \Comment{iterate with $[\lambda_t,b]$} \\
\quad \quad \textbf{else} \\
\quad \quad \quad \quad $b = \lambda_t$ \Comment{iterate with $[a,\lambda_t]$} \\
\textbf{end loop}
\end{algorithmic}
\end{algorithm}

\begin{theorem}
\label{thm-binary-search}
Under Assumption~\ref{ass} (in particular $F$ is $(L,\kappa)$-weakly smooth), and for fixed stopping parameters $(C_t,\epsilon_t)$ and endpoints $(x_t,x_t^{ag})$,  Algorithm~\ref{algo-binary-search} finds $\lambda$, satisfying
\begin{align*}
x_{t}^{md} & = \lambda x_{t}^{ag} + (1-\lambda) x_{t}  \\
 \langle \nabla F(x_t^{md}),x_t^{md} - x_{t}  \rangle 
& +  C_t  \Big( F(x_t^{md}) - F(x_t^{ag})  \Big) \leq \epsilon_t  
\end{align*}
in $\mathcal{O}_{\kappa}\left(\max\{ \log(C_t),  \log \Big( \frac{ L\|x_t - x_t^ {ag} \|^\kappa }{ \epsilon_t }\Big) \} + 1 \right) $ iterations.
\end{theorem}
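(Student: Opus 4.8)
The plan is to reduce everything to a one–dimensional statement about
$g(\lambda) := F(\lambda x_t^{ag} + (1-\lambda)x_t) - F(x_t^{ag})$ on $[0,1]$ and then analyse the binary search directly on $g$. Writing $u := x_t^{ag}-x_t$ and $\tilde L := L\|x_t - x_t^{ag}\|^\kappa$, the first step is to transfer weak smoothness: for all $\lambda,\lambda' \in [0,1]$,
$|g(\lambda) - g(\lambda') - g'(\lambda')(\lambda-\lambda')| = |D_F(x_t+\lambda u,\, x_t+\lambda' u)| \le \tfrac{\tilde L}{\kappa}|\lambda-\lambda'|^\kappa$, and adding this to the same bound with $\lambda,\lambda'$ exchanged gives $|g'(\lambda)-g'(\lambda')| \le \tfrac{2\tilde L}{\kappa}|\lambda-\lambda'|^{\kappa-1}$. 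Since $x_t^{md}-x_t = \lambda u$, one has $\langle \nabla F(x_t^{md}),x_t^{md}-x_t\rangle = \lambda g'(\lambda)$ and $F(x_t^{md})-F(x_t^{ag}) = g(\lambda)$, so the conclusion of the theorem is precisely $h(\lambda) := \lambda g'(\lambda) + C_t g(\lambda) \le \epsilon_t$ (this is also the quantity the loop tests, since $h(1) = g'(1)$ and $h(0) = C_t g(0)$ match the two early-return checks).

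Next I would dispose of the two early-return branches, each costing $\mathcal{O}(1)$ evaluations: if $g'(1)\le\epsilon_t$ return $\lambda=1$ ($h(1)=g'(1)\le\epsilon_t$); if $C_tg(0)\le\epsilon_t$ return $\lambda=0$ ($h(0)=C_tg(0)\le\epsilon_t$). Otherwise $g'(1) > \epsilon_t \ge 0$ and $g(0) > 0$, and a compactness argument shows $g$ attains its minimum over $[0,1]$ at some interior $\lambda_\ast \in (0,1)$ — not at $0$ since $g(0) > 0 \ge \min g$, and not at $1$ since $g'(1) > 0$ — so $g'(\lambda_\ast) = 0$ and $g(\lambda_\ast)\le g(1)=0$. (Incidentally, once the loop is reached one necessarily has $\tilde L > 0$: otherwise $g$ is affine with $g(1)=0$, so $g(0)>0$ would force $g'(1)<0$, contradicting the failed early return.)

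The heart of the argument is a loop invariant paired with a ``width of the good region'' estimate. I would prove by induction that throughout the loop the bracket satisfies $g(a) > 0$, $g(b) \le 0$, and $g'(b) > 0$: the first two are immediate from the update rules, and $g'(b) > 0$ holds because whenever $b$ is overwritten by a midpoint $m$ we are in the branch $g(m)\le 0$, so the failed test yields $m g'(m) > \epsilon_t - C_t g(m) \ge 0$ (and initially $b=1$ with $g'(1)>0$). These three facts give $\min_{[a,b]} g \le g(b)\le 0 < g(a)$, with the minimum at neither endpoint, hence an interior critical point $\lambda_\dagger \in (a,b)$ with $g'(\lambda_\dagger)=0$ and $g(\lambda_\dagger)\le 0$. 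For the good-region estimate, for any $\lambda\in[0,1]$ with $|\lambda-\lambda_\dagger|\le\delta$, weak smoothness anchored at $\lambda_\dagger$ gives $g(\lambda) \le \tfrac{\tilde L}{\kappa}\delta^\kappa$ and the Hölder bound gives $|g'(\lambda)| \le \tfrac{2\tilde L}{\kappa}\delta^{\kappa-1}$, so (using $\lambda\le 1$ and $C_t\ge 0$) $h(\lambda) \le \tfrac{2\tilde L}{\kappa}\delta^{\kappa-1} + C_t\tfrac{\tilde L}{\kappa}\delta^\kappa$; choosing $\delta := \min\{1/C_t,\ (\kappa\epsilon_t/(3\tilde L))^{1/(\kappa-1)}\}$ makes the right-hand side $\le\epsilon_t$, so every point within $\delta$ of $\lambda_\dagger$ is a valid answer.

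Finally, the bracket length halves every iteration, so before iteration $k$ it equals $2^{-(k-1)}$; as soon as $2^{-(k-1)} \le \delta$ we have $\lambda_\dagger\in(a,b)$ and the midpoint lies within $\delta$ of $\lambda_\dagger$, so the test succeeds and the algorithm returns. Hence the loop runs at most $\lceil\log_2(1/\delta)\rceil + 1$ iterations, and $\log_2(1/\delta) = \max\{\log_2 C_t,\ \tfrac{1}{\kappa-1}\log_2(3\tilde L/(\kappa\epsilon_t))\} = \mathcal{O}_\kappa\big(\max\{\log C_t,\ \log(L\|x_t-x_t^{ag}\|^\kappa/\epsilon_t)\} + 1\big)$; adding the $\mathcal{O}(1)$ early-return checks gives the stated bound. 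I expect the main obstacle to be pinning down the right invariant: one should not try to trap a single fixed point (the global minimizer need not stay in the bracket) but instead maintain $\{g(a)>0,\ g(b)\le 0,\ g'(b)>0\}$, which both survives the update and forces an interior zero of $g'$ with $g\le 0$ inside the bracket. Relatedly, the good-region bound must control $h$ on \emph{both} sides of $\lambda_\dagger$, and this is exactly why it matters that $\lambda_\dagger$ is a genuine critical point ($g'(\lambda_\dagger)=0$) rather than merely a point with $g'\le 0$: a steep negative slope there would ruin the estimate on the left side.
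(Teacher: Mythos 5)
Your proof is correct and follows essentially the same route as the paper's: reduce to the one-dimensional function $g$, maintain a bracket invariant ($g(a)>0$, $g'(b)>0\ge g(b)$) that guarantees an interior critical point with $g\le 0$, show every $\lambda$ within a suitably chosen $\delta$ of such a point satisfies the stopping condition, and count halvings. If anything your version is slightly tighter --- you use the correct H\"older exponent $\kappa-1$ for $g'$ (the paper's auxiliary lemma states exponent $\kappa$, a harmless slip) and you explicitly handle the degenerate case $L\|x_t-x_t^{ag}\|^\kappa=0$ --- but the decomposition and key estimates are the same.
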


\cite{pmlr-v125-hinder20a} analyze their binary search algorithm for smooth functions, i.e., specifically with $\kappa=2$ in Definition~\ref{def:weak smooth}. Our algorithm accommodates a general $\kappa$, addressing the broader class of weakly smooth functions. While the running time depends on $\kappa$, it appears only as an exponent in the logarithm. Consequently, this introduces only a constant multiplicative factor, thereby generalizing the algorithm of~\cite{pmlr-v125-hinder20a}.
\\
\\
{\bf Proof Sketch.}
The proof can be roughly divided into \textit{three} parts. We first show that there exists a $\lambda_*$ satisfying the strengthening of condition \eqref{bin_cond} with $\epsilon_t=0$. Next, we show that  this implies that condition \eqref{bin_cond} holds for all $\lambda \in [\lambda_*-\delta, \lambda_*]$, where $\delta$ is properly chosen depending on the weak smoothness parameters $(L,\kappa)$ of $F$ and the parameters $(C_t,\epsilon_t,x_t,x_t^{ag})$ of the algorithm. Finally we prove that our algorithm finds a $\lambda$ within that interval after at most $\mathcal{O}\Big(\log\Big( \frac{L C_t \| x_t - x_t^{ag} \|^\kappa }{\epsilon_t }\Big)\Big)$ iterations (details in Appendix~\ref{appendix-binary-search-proof}).

\subsection{Polynomial growth of our iterates}

To complete our analysis, we present upper bounds for the iterates $(x_t,x_t^{ag})$ generated by Algorithm~\ref{algo-acc}. In fact, our Theorem~\ref{thm-binary-search} shows that the number of iterations is upper bounded by a term logarithmic in $t$ provided the distance $\| x_t - x_{t}^{ag}\|$ grows at most polynomially in $t$. This result is formalized in the following theorem.

\begin{theorem}
\label{bounded-iterate}
Let $D_{\psi}(x_\star,x_1) \leq B $. Under Assumption~\ref{ass}, suppose we are running Algorithm~\ref{algo-acc}
such that for all $t \geq 1$
\begin{align*}
\max \left(  \Big(\frac{qL\eta_t}{\kappa} \Big)^{\frac{1}{q-1}} , \Big( \frac{L\alpha_t}{\kappa} \Big)^{\frac{1}{q-1}}   \right)\leq  K^{n_1}  t^{n_2},
\end{align*}
where $n_1,n_2 \geq 0$, and $K^{n_1} > B$ with $K^{n_1}$ potentially depending on $(L,\tau,\mu)$. Define the iterate radius by $R_t = \max\{ \| x_t - x_\star\|, \| x_t^{ag} - x_\star \|, \| x_t^{md} - x_\star \|\}$. Then, for all $t \geq 1$, we have
\[ R_t = \mathcal{O} \Big( K^{\frac{n_1(q-1)}{q-\kappa}} t^{\frac{(q-1)(n_2 + 1)}{q-\kappa}}\Big).\]
\end{theorem}

\subsection{Lower bound for Binary search}
Although the following theorem does not contribute to our upper bounds, we analyze the binary search algorithm without assuming star-convexity to highlight the efficiency of Algorithm~\ref{algo-binary-search} under weak smoothness. This section shows that the number of iterations needed in Theorem~\ref{thm-binary-search} to find a suitable $\lambda$ satisfying condition \eqref{bin_cond}  is both sufficient and necessary - fewer itations could yield in a $\lambda$ that does not satisfy condition \eqref{bin_cond}. To show this lower bound, we determine the minimum number of iteration steps required to find a suitable $\lambda$ for a univariate smooth target function.


\begin{theorem} 
Let $C,\epsilon, L_\star > 0$. Consider a sequence of $N$ points $(\lambda_1,\ldots,\lambda_N)$ evaluated at each iteration $(g(\lambda_i),g'(\lambda_i))$. If the number of iterations is insufficient meaning 
\[ N < \log(5) \Big( \log \frac{L_\star}{\epsilon} + \log \frac{C}{(C+1)^2} - \log(88) \Big),  \]
no algorithm can find $\lambda$ fulfilling condition \eqref{bin_cond}.
\end{theorem}
The theorem shows that if $N \lesssim  \log(L_\star/\epsilon) $,  it becomes impossible to determine $\lambda$ with precision $\epsilon$. The full version of this lower bound is given in Theorem \ref{induced-lower-bound-binary}.

\section{Summary}
\label{sec-application}
By combining the convergence results from Corollary~\ref{corr-p>2} with the complexity analysis presented in Theorem~\ref{thm-binary-search}, we establish a bound on the precision error of Algorithm~\ref{algo-acc} in terms of the number of first-order oracle calls.
\begin{corollary}
  \label{corr-application} Let $T > 0$ be the number of iterations and $\frac{1}{\mu} D_{\psi}( x_\star ,x_1) \le  B$. The tuning of Theorem~\ref{thm-p>2} with
  $\alpha
  ~=~
  \frac{\mu}{L}
  \left(
  \frac{
    (q-\kappa) B }{
    \kappa 
  }
\right)^{\frac{q-\kappa}{q}}$
guarantees
$F(x_{T+1}^{ag}) - F_\star
    ~=~
    \mathcal{O}_{q,\kappa}  \left(
      L
      \tau^q
      B^{\frac{\kappa}{q}}
      \frac{
        \log (T)}{T^{\frac{\kappa q + \kappa - q}{q}  }}
    \right).$
The algorithm's oracle usage over $T$ iterations is upper bounded by $\mathcal{O}(T \log(LB\tau T))$, where this bound represents the maximum number of times the oracle is called during the entire execution process.
\end{corollary}

To apply Corollary~\ref{corr-application} to the $p$-norms in Example~\ref{p-norm example}, we consider a problem where both the initial points $x_1$ and minimizer $x_\star$ lie within a $p$-norm ball of radius $R$, i.e., \ $\norm{x_1}_p, \norm{x_\star}_p \le R$, which implies that $\norm{x_1 - x_\star}_p \le 2R$ by the triangle inequality. For $p$-norms, recall that we use the distance-generating function $\psi(\cdot)=\frac{1}{q} \norm{\cdot}_p^q$ with $q = \max\{2,p\}$. In Appendix~\ref{bregman vs radius}, we show that the Bregman divergence also satisfies $D_\psi(x_\star, x_1) \le 2 R^q$. Combining Theorems~\ref{thm-p>2} and~\ref{bounded-iterate}, we obtain the following convergence guarantees.
\begin{corollary}
  \label{cor-p<2}
  Consider the setting of Assumption~\ref{ass} and let $\norm{\cdot}_p$ denote the $p$-norm.
\begin{itemize}
    \item[(i)] 
    For $1 < p \leq 2$ and $\kappa <2$, Algorithm~\ref{algo-acc} with $q=2$ returns an $\epsilon$-optimal solution within
\[  \mathcal{O}_{\kappa} \left(\left(\frac{L \tau^{2} R^\kappa}{\epsilon}\right)^{\frac{2}{3\kappa - 2 }} \log^2 \Big( \frac{L \tau R}{\epsilon} \Big) \right)  \]
evaluations of the function value and gradient.
\item[(ii)] \label{cor-p}
For $ p > 2$, Algorithm~\ref{algo-acc} with $q=p$ returns an $\epsilon$-optimal solution within
\[ \mathcal{O}_{p,\kappa} \left(\Big(\frac{L \tau^{p} R^\kappa}{\epsilon}\Big)^{\frac{p}{\kappa p+\kappa-p}}  \log^2 \Big(\frac{L \tau R}{\epsilon} \Big) \right)   \]
evaluations of the function values and the gradients.
\end{itemize}
\end{corollary}

Note that the convergence guarantees of Corollary~\ref{cor-p<2} for star-convex functions match the lower bounds proved for convex functions, completing Table~\ref{our:results}. For $p=1$, our analysis applies with a dimension factor, detailed in Appendix~\ref{appendix-p=1}. For the smooth case $\kappa=2$ with $1<p<2$, where iterates may grow exponentially, we extend our algorithm while maintaining similar complexity; further details are provided in Appendix~\ref{app:qk2}. 

\section{Conclusion and future research}
\label{sec5}
In this work, we introduce a novel class of structured non-convex functions, namely $\tau$-star-convex functions that are weakly smooth  with respect to an arbitrarily norm. Our framework applies to a broad subclass of star-convex functions and generalizes the setting analyzed in
\cite{pmlr-v125-hinder20a} which focuses on smooth star-convex functions with respect to the Euclidean norm.

We develop the accelerated Algorithm~\ref{algo-acc} to efficiently minimize any weakly smooth function within this broad class, building on and extending the binary search technique from~\cite{pmlr-v125-hinder20a}. Our analysis shows that the algorithm achieves a near-optimal accelerated convergence rate for all $p$-norms with $p>1$. Additionally, our algorithm has a runtime dependence on $\tau^{\max\{2,p\}}$ for $p\geq 1$, shown to be optimal in the Euclidean case when $p=2$, see \cite{pmlr-v125-hinder20a}.

While our work focuses on unconstrained minimization problems, future research could extend our approach to constrained settings. Currently, the proximal step for calculating  $x_{t+1}^{ag}$ in Algorithm~\ref{algo-acc} is shown to be effective only for unconstrained problems. For the constrained non-convex case, since evaluating the direction of the constrained proximal step becomes a challenge, it is unknown if accelerated convergence rates are possible. 


Finally, our method operates within a first-order access model, meaning that the algorithm has access to both the objective function value and gradient at the evaluation points. It remains an open problem whether this can be extended to ``pure'' first-order access models, that rely solely on gradient evaluations. While this extension is feasible in the convex case, it is unclear whether it can be achieved for star-convex functions. Another promising direction is to explore the applicability of our method in a stochastic model. Although stochastic accelerated mirror descent has been previously analyzed in the literature \cite{NIPS2009_ec5aa0b7}, the main challenge lies in adapting the binary search algorithm to the noisy setting.

\printbibliography

\newpage
\appendix

\section{A regularity property of norms}
\label{app:prelim.proofs}

In this section, we will discuss about the regularity property for norms which is core for our acceleration scheme. For a norm $\| \cdot \|$ with associated dual norm $\| \cdot \|_\ast $, we define the gradient (or any sub-gradient) of the squared norm $\|\cdot\|^2$ by
\[ \phi(x)
  ~:=~
  \nabla \left(  \frac{\|x\|^2}{2} \right)
  ~=~
  \operatorname*{argmax}_{y : \|y\|_\ast = \|x\|}~ y^\top x
  .
\]
Wherever the norm is not differentiable, we may pick $\phi(x)$ to be any sub-gradient or maximizer. To see the equality, recall that $\|x\| = \max_{z : \|z\|_\ast = 1} z^\top x$ and let $z$ attain that maximum. Then differentiation\footnote{Recall that a sub-gradient of a maximum $x \mapsto \max_y f(x,y)$ is the derivative of the objective $\nabla_x f(x,y_*)$ with a maximizer $y^*$ held fixed.} gives $\phi(x) = \|x\| z$. Reparametrising by $y = \|x\| z$ gives the right hand side.

\begin{fact} [Regularity of a norm]
\label{def-regular-norm}
For all $x \in \mathbb{R}^d $
\begin{align*}
\langle \phi(x), x\rangle & ~=~  \|x\|^2,  \\
\| \phi(x) \|_{\ast} & ~=~ \|x \|.
\end{align*}
\end{fact}
These two properties are crucial in our accelerated framework (Algorithm~\ref{algo-acc}), especially for weakly smooth objective functions. In particular, they allow for the choice of aggressive step-sizes and the derivation of accelerated convergence rates.


\begin{lemma}
\label{proof-regular-lemma}
For $s \geq 1 $, we define $\phi_{s}(x) := \nabla \Big( \frac{\|x\|^s}{s} \Big). $ For all $\alpha \ge 0$
\begin{align*}
\begin{cases}
\langle \phi_{s}(x), x\rangle & =  \|x\|^s  \\
\| \phi_s (x) \|_{\ast}^{\alpha} & = \|x \|^{\alpha(s - 1)}.
\end{cases}
\end{align*}
\end{lemma}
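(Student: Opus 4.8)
\textbf{Proof proposal for Lemma~\ref{proof-regular-lemma}.}

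The plan is to reduce everything to the basic $C$-regularity of $\|\cdot\|$ (Definition~\ref{def-regular-norm}) and to the relation $\phi_s = \nabla(\|x\|^s/s)$. First I would compute $\phi_s$ explicitly in terms of $\phi = \phi_2 = \nabla(\|x\|^2/2)$. Writing $\|x\|^s/s = \frac{1}{s}(\|x\|^2)^{s/2}$ and applying the chain rule gives, for $x \neq 0$,
\[
  \phi_s(x) ~=~ \frac{1}{s}\cdot \frac{s}{2}(\|x\|^2)^{s/2 - 1}\cdot \nabla(\|x\|^2) ~=~ \|x\|^{s-2}\,\phi(x),
\]
where I used $\nabla(\|x\|^2) = 2\phi(x)$. (The case $x = 0$ is trivial since both sides of both inequalities vanish; one should note $\phi_s(0) = 0$ for $s > 1$, and for $s = 1$ the first inequality reads $\langle \phi_1(0),0\rangle \ge 0$, which holds for any subgradient choice.)

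With this identity in hand, the first inequality follows directly:
\[
  \langle \phi_s(x), x\rangle ~=~ \|x\|^{s-2}\langle \phi(x), x\rangle ~\ge~ \|x\|^{s-2}\|x\|^2 ~=~ \|x\|^s,
\]
using the first bullet of $C$-regularity, $\langle \phi(x),x\rangle \ge \|x\|^2$, and $\|x\|^{s-2} \ge 0$. For the second inequality, I would first establish the $\alpha = 1$ case from the second bullet of $C$-regularity, $\|\phi(x)\|_\ast \le C\|x\|$:
\[
  \|\phi_s(x)\|_\ast ~=~ \|x\|^{s-2}\,\|\phi(x)\|_\ast ~\le~ \|x\|^{s-2}\cdot C\|x\| ~=~ C\|x\|^{s-1},
\]
and then raise both sides to the power $\alpha \ge 0$ (legitimate since both sides are nonnegative and $t \mapsto t^\alpha$ is monotone on $[0,\infty)$) to obtain $\|\phi_s(x)\|_\ast^\alpha \le C^\alpha \|x\|^{\alpha(s-1)}$, as claimed.

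There is no serious obstacle here; the lemma is essentially bookkeeping on top of Definition~\ref{def-regular-norm}. The only points requiring a little care are (i) differentiability of $\|\cdot\|^s$ at the origin — for $s > 1$ the function is differentiable at $0$ with $\nabla(\|x\|^s/s)\big|_{x=0} = 0$, while for $s = 1$ one interprets $\phi_1$ via a subgradient and checks both inequalities degenerate appropriately — and (ii) confirming the chain-rule computation is valid wherever $\|\cdot\|$ is differentiable (which, by hypothesis of regularity, is where it matters). I would phrase the argument for $x \neq 0$ and dispatch $x = 0$ in one sentence.
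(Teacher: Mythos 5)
Your proposal is correct and matches the paper's own proof essentially line for line: both compute $\phi_s(x) = \|x\|^{s-2}\phi(x)$ by the chain rule and then apply the two conditions of $C$-regularity directly. Your extra care about the $x=0$ case and about first establishing $\alpha=1$ before raising to the power $\alpha$ is a minor (and welcome) refinement of the same argument.
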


\begin{proof}
Recall that $\phi(x)= \nabla (\frac{\|x \|^2}{2})$. Then 
\begin{align*}
\phi_{s}(x) & = \nabla \Big( \frac{(\|x\|^2)^{s/2}}{s} \Big) = \frac{1}{s} \cdot \frac{s}{2} (\|x\|^2)^{\frac{s}{2}-1} \cdot \nabla \Big( \|x \|^2 \Big)
& =  (\|x\|^2)^{\frac{s}{2}-1} \cdot \frac{1}{2} \nabla \Big( \|x \|^2 \Big)  = \frac{ \phi(x)}{ \|x\|^{2-s}}.
\end{align*}
As 
\begin{align*}
  \langle \phi_{s}(x), x\rangle & = \frac{\langle \phi(x), x\rangle }{\|x\|^{2-s}} = \frac{\|x\|^2}{\|x\|^{2-s}} = \|x\|^s
  ,
  \\
  \| \phi_s(x) \|_\ast ^{\alpha} & = \frac{\| \phi(x) \|_\ast^{\alpha}}{\|x\|^{\alpha(2-s)}} = \frac{\|x \|^{\alpha}}{\|x\|^{\alpha(2-s)}}  = \|x \|^{\alpha(s - 1)}
  ,
\end{align*}
the assertion follows.
\end{proof}

\section{Useful lemma for convergence proof}

\begin{lemma}[Young's inequality, \cite{young}]
  \label{young-ineq}
Let $a,b \geq 0$ and $p,q \geq 1$ such that $\frac{1}{p} + \frac{1}{q} = 1$. Then it holds, that  
\[ ab \leq \frac{a^p}{p} + \frac{b^q}{q}.\]
\end{lemma}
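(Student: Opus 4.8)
\textbf{Proof proposal for Lemma~\ref{young-ineq}.}
The plan is to deduce the inequality from the convexity of the exponential function (equivalently, the concavity of the logarithm), exploiting that $\frac1p$ and $\frac1q$ are nonnegative weights summing to $1$. First I would dispose of the degenerate cases: if $a=0$ or $b=0$, the left-hand side is $0$ while the right-hand side is a sum of nonnegative terms, so the inequality holds trivially. Hence assume $a,b>0$ for the remainder.

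Next, write $ab = \exp(\ln a + \ln b)$ and regroup the exponent using $\frac1p + \frac1q = 1$ as
\[
  ab ~=~ \exp\!\Big( \tfrac1p \ln a^p + \tfrac1q \ln b^q \Big).
\]
Since $\exp$ is convex and $\big(\tfrac1p,\tfrac1q\big)$ is a probability vector, Jensen's inequality applied to the two points $\ln a^p$ and $\ln b^q$ gives
\[
  \exp\!\Big( \tfrac1p \ln a^p + \tfrac1q \ln b^q \Big)
  ~\le~
  \tfrac1p \exp(\ln a^p) + \tfrac1q \exp(\ln b^q)
  ~=~
  \frac{a^p}{p} + \frac{b^q}{q},
\]
which is exactly the claim. (Equivalently, one may phrase this via concavity of $\ln$: $\ln\big(\tfrac{a^p}{p}+\tfrac{b^q}{q}\big) \ge \tfrac1p\ln a^p + \tfrac1q\ln b^q = \ln(ab)$, and then exponentiate using monotonicity of $\exp$.)

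There is no real obstacle here; the only things to be careful about are (i) handling $a=0$ or $b=0$ separately, since $\ln$ is used in the main argument, and (ii) invoking the correct direction of Jensen's inequality for a convex function. As an alternative self-contained route that avoids $\exp$/$\ln$ entirely, one could fix $b>0$, set $h(a) := \frac{a^p}{p} + \frac{b^q}{q} - ab$ for $a\ge 0$, note $h'(a) = a^{p-1} - b$, which vanishes at $a_0 := b^{1/(p-1)}$ and changes sign from negative to positive there, so $a_0$ is the global minimizer on $[0,\infty)$; then a direct computation using $p/(p-1) = q$ shows $h(a_0) = \frac{b^{p/(p-1)}}{p} + \frac{b^q}{q} - b^{1 + 1/(p-1)} = b^q\big(\tfrac1p + \tfrac1q - 1\big) = 0$, whence $h(a)\ge 0$ for all $a\ge 0$. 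Either argument completes the proof.
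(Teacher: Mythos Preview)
Your proof is correct and is the standard convexity-of-$\exp$ (equivalently concavity-of-$\ln$) argument; the handling of the $a=0$ or $b=0$ boundary cases and the alternative calculus route are both fine. The paper itself does not supply a proof of this lemma---it is stated with a citation to \cite{young} and used as a black box---so there is no in-paper argument to compare against.
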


\begin{lemma}  \label{lem:prox}
Let $f$, $\nu$ be convex functions and let $\nu$ be continuously differentiable. For \[ u^{\star} = \arg\min_{u\in {\cal X}} \{f(u) + D^{\nu}(u,y)\}, \]
and all $u$, it holds
\[f(u^{\star}) + D^{\nu}(u^\star,y) + D^{f}(u,u^\star) \leq f(u) +  D^{\nu}(u,y) - D^{\nu}(u,u^\star). \]
\end{lemma}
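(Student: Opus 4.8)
\textbf{Proof plan for Lemma~\ref{lem:prox}.}

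The plan is to prove this standard three-point (or "prox") inequality by exploiting the first-order optimality condition at the minimizer $u^\star$ together with the algebraic identities that relate Bregman divergences to inner products of gradients. First I would write down the optimality condition for $u^\star = \arg\min_{u} \{f(u) + D^\nu(u,y)\}$. Since $\nu$ is differentiable, $D^\nu(\cdot, y)$ has gradient $\nabla \nu(\cdot) - \nabla \nu(y)$, so (for the unconstrained case, or using the variational inequality in the constrained case) there is a subgradient $s \in \partial f(u^\star)$ with
\[
  \langle s + \nabla \nu(u^\star) - \nabla \nu(y),\, u - u^\star \rangle \ge 0 \qquad \text{for all } u \in \mathcal X.
\]
From convexity of $f$ we have $f(u) \ge f(u^\star) + \langle s, u - u^\star\rangle$, i.e. $\langle s, u - u^\star \rangle \le f(u) - f(u^\star) - D^f(u,u^\star)$ once we recall $D^f(u,u^\star) = f(u) - f(u^\star) - \langle s, u - u^\star\rangle$ (interpreting the Bregman divergence of $f$ with respect to the chosen subgradient $s$).

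The key step is then the well-known "three-point identity" for the Bregman divergence of $\nu$: for any $u$,
\[
  \langle \nabla \nu(u^\star) - \nabla \nu(y),\, u - u^\star \rangle
  ~=~ D^\nu(u,y) - D^\nu(u,u^\star) - D^\nu(u^\star,y).
\]
I would verify this by simply expanding all three Bregman divergences from their definition and cancelling terms. Combining this identity with the optimality inequality gives
\[
  \langle s, u - u^\star \rangle \;\ge\; D^\nu(u^\star,y) + D^\nu(u,u^\star) - D^\nu(u,y),
\]
and substituting the convexity bound $\langle s, u - u^\star\rangle \le f(u) - f(u^\star) - D^f(u,u^\star)$ rearranges exactly into the claimed inequality
\[
  f(u^\star) + D^\nu(u^\star,y) + D^f(u,u^\star) \;\le\; f(u) + D^\nu(u,y) - D^\nu(u,u^\star).
\]

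The main obstacle is mostly one of careful bookkeeping rather than depth: one has to be consistent about which subgradient of $f$ is used to define $D^f(u,u^\star)$ (the proof uses the particular $s$ appearing in the optimality condition), and — if the domain $\mathcal X$ is a proper subset of $\mathbb R^d$ — one must phrase the optimality condition as a variational inequality restricted to $\mathcal X$ rather than as $\nabla(\cdot) = 0$. Since the paper ultimately applies this lemma in the unconstrained setting with $f$ linear (so $D^f \equiv 0$) or with $f(\cdot) = \alpha_t \langle \nabla F(x_t^{md}), \cdot\rangle$ plus the proximal term, the subgradient is in fact unique and the argument simplifies; I would present the general convex-$f$ version since it costs nothing extra, and then note the specializations used in Algorithm~\ref{algo-acc}.
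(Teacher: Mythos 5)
Your proposal is correct and follows essentially the same route as the paper: first-order optimality at $u^\star$, the three-point identity for $D^\nu$, and the definition of $D^f(u,u^\star)$ to convert $\langle \nabla f(u^\star), u-u^\star\rangle$ into $f(u)-f(u^\star)-D^f(u,u^\star)$. The only difference is your (harmless, slightly more careful) subgradient phrasing, whereas the paper simply writes $\nabla f(u^\star)$.
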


\begin{proof}
As $u^\star$ is a minimizer, it fulfills for all  $u\in{\cal X}$ the first-order optimality condition
\begin{align}
\label{foc}
\langle \nabla f(u^\star) + \nabla D^{\nu}(u^{\star},y) , u - u^{\star}\rangle \geq 0,
\end{align}
 with the gradient of the divergence $D^\nu$ taken with respect to the first argument. Applying the three-points identity \cite{CT93}, yields
\[\langle \nabla D^{\nu}(u^{\star},y) , u - u^{\star}\rangle = D^{\nu}(u,y)- D^{\nu}(u,u^{\star})- D^{\nu}(u^{\star},y),\]
leading to
\begin{align*}
  f(u) - f(u^{\star}) - D^{f}(u,u^{\star}) & = \langle \nabla f(u^\star) , u - u^{\star}\rangle \geq D^{\nu}(u,u^{\star})- D^{\nu}(u,y)  +D^{\nu}(u^{\star},y).
\end{align*}

\end{proof}

\begin{lemma}
\label{Lemma inexact gradient}
Set
\begin{equation}
    \begin{aligned}
    \begin{cases}
    r & := \frac{q-\kappa}{\kappa} > 0 \\
    M & :=   \big( \frac{r}{q} \big)^{r}  \geq 0.
    \end{cases}
    \end{aligned}
\end{equation}
Then for all $\delta>0,x,y \in \mathbb{R}^d $
\begin{equation}
\begin{aligned}
\frac{\| x-y \|^\kappa}{\kappa} \leq  \frac{M}{q \delta^r} \|x-y\|^q +  \delta .
\end{aligned}
\end{equation}
\end{lemma}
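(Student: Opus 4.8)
### Proof proposal for Lemma~\ref{Lemma inexact gradient}

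My plan is to prove the inequality $\frac{\norm{x-y}^\kappa}{\kappa} \le \frac{M}{q\delta^r}\norm{x-y}^q + \delta$ by reducing it to a one-variable problem and then applying Young's inequality (Lemma~\ref{young-ineq}). Set $t := \norm{x-y} \ge 0$; the claim becomes $\frac{t^\kappa}{\kappa} \le \frac{M}{q\delta^r} t^q + \delta$ for all $t \ge 0$ and $\delta > 0$. Since $1 < \kappa \le 2 \le q$, we have $\kappa < q$ unless $\kappa = q = 2$ (a degenerate case where $r = 0$ and the statement is immediate, reading $\frac{t^2}{2} \le \frac{t^2}{2} + \delta$). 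So assume $\kappa < q$, hence $r = \frac{q-\kappa}{\kappa} > 0$.

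The key step is to split the exponent $\kappa$ as a convex combination of $q$ and $0$: write $t^\kappa = t^{\kappa} \cdot 1$ and apply Young's inequality with the conjugate pair $\big(\frac{q}{\kappa}, \frac{q}{q-\kappa}\big)$, which are both $\ge 1$ and satisfy $\frac{\kappa}{q} + \frac{q-\kappa}{q} = 1$. Concretely, for any scaling parameter $c > 0$ I would write $t^\kappa = (c\, t^\kappa) \cdot c^{-1}$ and bound
\[
  t^\kappa
  ~=~
  (c\, t^{\kappa}) \cdot c^{-1}
  ~\le~
  \frac{\kappa}{q}\,(c\, t^\kappa)^{q/\kappa} + \frac{q-\kappa}{q}\,(c^{-1})^{q/(q-\kappa)}
  ~=~
  \frac{\kappa}{q}\, c^{q/\kappa} t^q + \frac{q-\kappa}{q}\, c^{-q/(q-\kappa)}.
\]
Dividing by $\kappa$ gives $\frac{t^\kappa}{\kappa} \le \frac{1}{q} c^{q/\kappa} t^q + \frac{r}{q} c^{-q/(q-\kappa)}$, using $\frac{q-\kappa}{q\kappa} = \frac{r}{q}$.

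Now I choose $c$ so that the second term equals exactly $\delta$: solve $\frac{r}{q} c^{-q/(q-\kappa)} = \delta$, i.e.\ $c^{-q/(q-\kappa)} = \frac{q\delta}{r}$, so $c^{q/\kappa} = \big(\frac{q\delta}{r}\big)^{-(q-\kappa)/\kappa} = \big(\frac{r}{q\delta}\big)^{r} = \frac{(r/q)^r}{\delta^r} = \frac{M}{\delta^r}$, recognizing $M = (r/q)^r$. Substituting back yields exactly $\frac{t^\kappa}{\kappa} \le \frac{M}{q\delta^r} t^q + \delta$, which is the claim. The only mild subtlety — the main "obstacle," though it is minor — is handling the degenerate endpoint $\kappa = q$ (equivalently $r = 0$) and the boundary cases $c,t$ or $\delta$ at $0$ separately, and checking that the exponents appearing as conjugates are indeed $\ge 1$ so that Lemma~\ref{young-ineq} applies; all of this is routine. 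No convexity or norm structure beyond homogeneity is used — the inequality is purely scalar in $t = \norm{x-y}$.
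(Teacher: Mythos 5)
Your proof is correct and follows essentially the same route as the paper's: both apply Young's inequality with the conjugate pair $\bigl(\tfrac{q}{\kappa}, \tfrac{q}{q-\kappa}\bigr)$ and choose the free scaling parameter so that the constant term equals exactly $\delta$, which forces the coefficient $\tfrac{M}{q\delta^r}$ on the $\|x-y\|^q$ term. The only difference is cosmetic (you scale by $c$ inside the product rather than substituting $t=\|x-y\|^\kappa/\kappa$ directly), and your side remarks on the degenerate case $r=0$ are harmless since the lemma assumes $r>0$.
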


\begin{proof}
  Using Young's inequality~\ref{young-ineq} with $\frac{1}{a} + \frac{1}{b} = 1 $, leads to $t \leq \frac{1}{az}t^a + \frac{1}{b}z^{b-1}.$  By choosing $t:= \frac{\|x-y\|^\kappa}{\kappa}$, $a=\frac{q}{\kappa}$, $b=\frac{q}{q-\kappa}$, $z= (b\delta)^{\frac{1}{b-1}}= (b\delta)^{r}$, it follows that
\[ \frac{\|x-y\|^{\kappa} }{\kappa}\leq  \frac{M}{q \delta^r} \|x-y\|^q +  \delta, \]
where we use that
\[\frac{\kappa }{q} \left( \frac{q-\kappa}{q \delta}\right)^{r} \left(\frac{1}{\kappa}\right)^{q/\kappa} = \frac{1}{q \delta^r}\left(\frac{q-\kappa}{q \kappa}\right)^{r} = \frac{M}{q \delta^r}. \]
\end{proof}

\section{Convergence proof of Theorem~\ref{thm-p>2} }
\label{convergence proof}
Our proof can be decomposed in \textit{three} major steps. We first derive a bound for $F(x_t^{md})-F_\star$ by analyzing our mirror descent step, then we proceed with an acceleration step comparing $F(x_{t+1}^{ag})$ and $F(x_t^{md})$ and finally we tune the parameters to satisfy the required conditions.

\subsection{Mirror descent analysis}

We apply Lemma~\ref{lem:prox} to the linear function $f(x) = \eta_t \langle \nabla F(x_t^{md}),x\rangle$ and set $ \nu = \psi$,  $u^\star = x_{t+1}$, $y=x_t$, $u = x$. For all $x \in \mathbb{R}^d$, this yields
\[ \eta_t \langle \nabla F(x_t^{md}),x_{t+1} \rangle + D_{\psi}(x_{t+1},x_t) \leq \eta_t \langle \nabla F(x_t^{md}),x \rangle +  D_{\psi}(x,x_t) -  D_{\psi}(x,x_{t+1}).
\]
By setting $x=x_\star$ and subtracting $\eta_t \langle \nabla F(x_t^{md}),x_\star \rangle $ on both sides, we obtain
\begin{align*}
  \eta_t \langle \nabla F(x_t^{md}),x_{t+1} - x_\star \rangle + D_{\psi}(x_{t+1},x_t) \leq  D_{\psi}(x_\star ,x_t) -  D_{\psi}(x_\star,x_{t+1})
  .
\end{align*}
We split
\[ \eta_t \langle \nabla F(x_t^{md}),x_{t+1} - x_\star \rangle = \eta_t \langle \nabla F(x_t^{md}),x_t^{md}  - x_\star \rangle + \eta_t \langle \nabla F(x_t^{md}),x_{t+1} - x_t^{md} \rangle, \]
and use star-convexity of $F$
\begin{align*}
\frac{\eta_t}{\tau} \Big( F(x_t^{md}) - F_\star \Big) \leq \eta_t \langle \nabla F(x_t^{md}),x_t^{md}  - x_\star \rangle
\end{align*}
to find
\begin{align*}
 \frac{\eta_t}{\tau} \Big( F(x_t^{md}) - F_\star \Big) + \eta_t \langle \nabla F(x_t^{md}),x_{t+1} - x_t^{md} \rangle + D_{\psi}(x_{t+1},x_t) \leq  D_{\psi}(x_\star ,x_t) -  D_{\psi}(x_\star,x_{t+1}).
\end{align*}
This leads to
\begin{equation}
\begin{aligned}
 & \frac{\eta_t}{\tau} \Big( F(x_t^{md}) - F_\star \Big) + D_{\psi}(x_{t+1},x_t) \\
 {}\leq{} &  D_{\psi}(x_\star ,x_t) -  D_{\psi}(x_\star,x_{t+1}) + \eta_t \langle \nabla F(x_t^{md}),x_t^{md} - x_{t+1}  \rangle  \\
{}={} &  D_{\psi}(x_\star ,x_t) -  D_{\psi}(x_\star,x_{t+1}) +  \eta_t \langle \nabla F(x_t^{md}),x_t^{md} - x_t  \rangle + \eta_t \langle \nabla F(x_t^{md}),x_{t} - x_{t+1}  \rangle
.
\end{aligned}
\end{equation}
By using that the distance-generating function $\psi$ is $(\mu,q)$-uniformly convex with respect to the norm $\|\cdot \| $, i.e., that it fulfills
\[D_{\psi}(x_{t+1},x_t) \geq \frac{\mu}{q} \| x_{t+1} - x_{t} \|^q,
\]
our equation becomes
\begin{equation}
\label{mirror_analysis}
\begin{aligned}
 & \frac{\eta_t}{\tau} \Big( F(x_t^{md}) - F_\star \Big)  \\
{}\leq{} &  D_{\psi}(x_\star ,x_t) -  D_{\psi}(x_\star,x_{t+1}) +  \eta_t \langle \nabla F(x_t^{md}),x_t^{md} - x_t  \rangle \\
 & {}+{}\eta_t \langle \nabla F(x_t^{md}),x_{t} - x_{t+1}  \rangle  - \frac{\mu}{q} \| x_{t+1} - x_{t} \|^q.
\end{aligned}
\end{equation}
The goal of the next step is to find an upper bound for
  \[ \eta_t \langle \nabla F(x_t^{md}),x_{t} - x_{t+1} \rangle - \frac{\mu}{q} \| x_{t+1} - x_{t} \|^q
  .\]

\subsection{Choice of aggregated point}

Recall from Algorithm~\ref{algo-acc} that 
\begin{align*}
x_{t+1}^{ag}  & = \arg\min_{x \in \mathbb{R}^d } \{ \alpha_t \langle \nabla F(x_t^{md}),x \rangle + \frac{\mu}{q} \| x - x_t^{md} \|^{q} \}
\end{align*}
and define $\phi_q(x) := \nabla \Big( \frac{\| x \|^q}{q} \Big)$. For all $u \in \mathbb{R}^d$, it holds
\[ \langle \alpha_t \nabla F(x_t^{md}) + \mu \phi_q(x_{t+1}^{ag} - x_t^{md}) , u - x_{t+1}^{ag} \rangle \geq 0.\]
Choosing $u = x_{t+1}^{ag} + \lambda (x_{t+1} - x_{t}) $ for any $\lambda > 0$, leads to
\begin{align*}
\lambda \alpha_t \langle  \nabla F(x_t^{md}) , x_{t} - x_{t+1} \rangle & \leq \lambda \mu \langle \phi_q(x_{t+1}^{ag} - x_t^{md}), x_{t+1} - x_t \rangle.
\end{align*}
Subtracting $\frac{\mu}{q} \| x_{t+1} - x_{t} \|^q$ on both sides, yields 
\begin{align}
\label{ageq}
 &\eta_t \langle  \nabla F(x_t^{md}) , x_{t} - x_{t+1} \rangle - \frac{\mu}{q} \| x_{t+1} - x_{t} \|^q \notag \\& \leq  \frac{\mu \eta_t}{\alpha_t} \langle  \phi_q(x_{t+1}^{ag} - x_t^{md}), x_{t+1} - x_t \rangle - \frac{\mu}{q} \| x_{t+1} - x_{t} \|^q\notag\\
  & {}={} \mu \Bigl \langle \frac{\eta_t}{\alpha_t}   \phi_q(x_{t+1}^{ag} - x_t^{md}), x_{t+1} - x_t \Bigr \rangle - \frac{\mu}{q} \| x_{t+1} - x_{t} \|^q
\end{align}
Let $q_\ast$ be the convex conjugate of $q$, i.e., \ $\frac{1}{q} + \frac{1}{q_{\ast}} = 1$.
A reformulation of Young's inequality in Lemma~\ref{young-ineq} gives
\begin{align*}
\langle a,b \rangle \leq  |\langle a,b \rangle | \leq \| a \|_{\ast} \cdot  \| b \| & \leq \frac{\| a \|_{\ast}^{q_\ast}}{{q_{\ast}}} + \frac{\| b \|^q }{q},
\end{align*}
leading to 
\begin{align*}
 \langle a,b \rangle - \frac{\| b \|^q }{q}  & \leq   \frac{\| a \|_{\ast}^{q_\ast}}{{q_{\ast}}}.
\end{align*}
Therefore \eqref{ageq} can be upper bounded by
\begin{equation}
\label{aggregated_mirror}
\begin{aligned}
 \mu \Bigl \langle \frac{\eta_t}{\alpha_t}   \phi_q(x_{t+1}^{ag} - x_t^{md}), x_{t+1} - x_t \Bigr \rangle - \frac{\mu}{q} \| x_{t+1} - x_{t} \|^q
 &\leq{}  \frac{\mu}{q_{\ast} } \Bigl \| \frac{\eta_t}{ \alpha_t}  \phi_q (x_{t+1}^{ag} - x_t^{md}) \Bigr \|_{\ast}^{q_{\ast}}  \\
&{}={}  \frac{\mu \eta_t^{q_{\ast}}}{q_{\ast}  \alpha_t^{q_{\ast}}} \| \phi_q(x_{t+1}^{ag} - x_t^{md}) \|_{\ast}^{q_{\ast}}  \\
&{}={}    \frac{ \mu  \eta_t^{q_{\ast}}}{q_{\ast} \alpha_t^{q_{\ast}}} \| x_{t+1}^{ag} - x_t^{md} \|^{(q-1 )q_{\ast}}\\
&{}={} \frac{ \mu  \eta_t^{q_{\ast}}}{q_{\ast}  \alpha_t^{q_{\ast}}} \| x_{t+1}^{ag} - x_t^{md} \|^{q},
\end{aligned}
\end{equation}
where the penultimate equality follows from Lemma~\ref{proof-regular-lemma}.
By setting $u = x_{t}^{md}$, we obtain from first-order optimality, that
\begin{align*}
\alpha_t  \langle \nabla F(x_t^{md}) , x_{t}^{md} - x_{t+1}^{ag} \rangle  & \geq \mu \langle \phi_q (x_{t+1}^{ag} - x_t^{md}) ,  x_{t+1}^{ag} - x_{t}^{md} \rangle,
\end{align*}
leading to
\begin{align*}
\alpha_t \Big( F(x_t^{md}) - F(x_{t+1}^{ag}) \Big) & \geq \mu \| x_{t}^{md} - x_{t+1}^{ag} \|^q - \alpha_t D_F(x_{t+1}^{ag}, x_t^{md}) \\
& \geq   \mu \| x_{t}^{md} - x_{t+1}^{ag} \|^q  - \frac{L\alpha_t}{\kappa}\| x_{t}^{md} - x_{t+1}^{ag} \|^\kappa, 
\end{align*}
where we use that $F$ is weakly smooth for the last inequality. By choosing
\begin{align*}
r & := \frac{q-\kappa}{\kappa} > 0 \\
M & :=   \Big( \frac{r}{q} \Big)^{r}  \geq 0,
\end{align*}
in Lemma~\ref{Lemma inexact gradient},  
it follows for all $\delta_t > 0$
\begin{align*}
  L\alpha_t \frac{\| x_{t}^{md} - x_{t+1}^{ag}  \|^\kappa}{\kappa} \leq  \frac{LM \alpha_t}{q \delta_t^r} \| x_{t}^{md} - x_{t+1}^{ag}  \|^q + L\alpha_t \delta_t.%
  \intertext{This leads to}\\
  \alpha_t \Big( F(x_t^{md}) - F(x_{t+1}^{ag}) \Big) \geq \Big( \mu - \frac{LM \alpha_t}{q \delta_t^r} \Big) \| x_{t}^{md} - x_{t+1}^{ag}  \|^q - L\alpha_t \delta_t
  .
\end{align*}
For convenience, we set $\frac{L M \alpha_t}{q \delta_t^r} = \frac{\mu}{q}$ by choosing $\delta_t = \Big( \frac{L}{\mu}  M\alpha_t \Big)^{1/r}$ and then get

\begin{align*}
\alpha_t \Big( F(x_t^{md}) - F(x_{t+1}^{ag}) \Big)  \geq \frac{\mu}{q_\ast} \| x_{t}^{md} - x_{t+1}^{ag}  \|^q - \frac{M^{1/r} }{\mu^{1/r}}\Big( L\alpha_t \Big)^{(r+1)/r}.
\end{align*}
Combining the previous equation with \eqref{ageq} and \eqref{aggregated_mirror}, yields
\begin{equation}\label{horror}
\begin{aligned}
& \eta_t \langle  \nabla F(x_t^{md}) , x_{t} - x_{t+1} \rangle  - \frac{\mu}{q} \| x_{t+1} - x_{t} \|^q  \\
\leq & \frac{\mu  \eta_t^{q_{\ast}}}{\alpha_t^{q_{\ast}}} \frac{\| x_{t+1}^{ag} - x_t^{md} \|^{q} }{q_\ast  }  \\
\leq & \frac{ \eta_t^{q_{\ast}}}{ \alpha_t^{q_{\ast}}} \Bigg( \alpha_t \Big( F(x_t^{md}) - F(x_{t+1}^{ag}) \Big)  +   \frac{M^{1/r} }{\mu^{1/r}}\Big( L\alpha_t \Big)^{(r+1)/r} \Bigg) \\
= & \frac{   \eta_t^{q_{\ast}}}{\alpha_t^{q_{\ast}-1}} \Big( F(x_t^{md}) - F(x_{t+1}^{ag}) \Big)  +   \frac{ \eta_t^{q_{\ast}}}{ \alpha_t^{q_{\ast}}}  \frac{M^{1/r} }{\mu^{1/r}} \Big( L\alpha_t \Big)^{(r+1)/r} \\
\end{aligned}
\end{equation}

\subsection{Simplified equation}

To simplify notation, we will abbreviate
\begin{equation}\label{notation-proof-p>2}
\begin{aligned}
A_t  & := \frac{  \eta_t^{q_{\ast}}}{ \alpha_t^{q_{\ast}-1}}   \\
B_t  & :=  \frac{  \eta_t^{q_{\ast}}}{ \alpha_t^{q_{\ast}}}  \frac{M^{1/r} }{\mu^{1/r}}  \Big( L\alpha_t \Big)^{1+1/r}  = \frac{ \eta_t^{q_{\ast}}}{\alpha_t^{q_{\ast}-1-1/r}} \frac{M^{1/r} }{\mu^{1/r}}   L ^{1+1/r}. \\ 
\end{aligned}
\end{equation}
Rearranging \eqref{mirror_analysis} and combining it with equation \eqref{horror} leads to
\begin{equation}
  \label{eq:blu}
\begin{aligned}
 &  A_t   \Big( F(x_{t+1}^{ag}) - F_\star \Big) \\
\leq  &  D_{\psi}(x_\star ,x_t) -  D_{\psi}(x_\star,x_{t+1}) + \eta_t \langle \nabla F(x_t^{md}),x_t^{md} - x_{t}  \rangle \\
 & + \Big(   A_t  -  \frac{\eta_t}{\tau} \Big) \Big( F(x_t^{md}) - F(x_t^{ag})  \Big)  \\
& + \Big(   A_t  -  \frac{\eta_t}{\tau} \Big) \Big( F(x_t^{ag}) - F_\star  \Big)  +   B_t
.
\end{aligned}
\end{equation}
With the binary search for $\lambda_t$, Algorithm~\ref{algo-acc} achieves
\begin{align*}
 \eta_t \langle \nabla F(x_t^{md}),x_t^{md} - x_{t}  \rangle 
+  \Big(  A_t  -  \frac{\eta_t}{\tau} \Big) \Big( F(x_t^{md}) - F(x_t^{ag})  \Big) \leq \eta_t \epsilon_t 
\end{align*}
within $\mathcal{O}(\log(\frac{L}{\epsilon_t}))$ iterations of function values and gradients. Thus, \eqref{eq:blu} becomes
\begin{equation}
\label{eqA}
\begin{aligned}
   A_t   \Big( F(x_{t+1}^{ag}) - F_\star \Big)
\leq &   D_{\psi}(x_\star ,x_t) -  D_{\psi}(x_\star,x_{t+1}) + \eta_t \epsilon_t 
\\
&
+  \Big(   A_t  -  \frac{\eta_t}{\tau} \Big) \Big( F(x_t^{ag}) - F_\star  \Big)  +   B_t. 
\end{aligned}
\end{equation}
To derive the optimal convergence rate, we need the following conditions to be fulfilled for a fixed $\alpha >0$:
\begin{align*}
\begin{cases}
A_t-\frac{\eta_t}{\tau} & \le A_{t-1}\\
B_t & \le  \alpha^{\frac{q}{q-\kappa}} \frac{1}{t}  \frac{M^{1/r} }{ \mu^{1/r}} L ^{1+1/r}.
\end{cases}
\end{align*}
If the previous conditions hold, we can compute the telescopic sum in \eqref{eqA} and get
\begin{equation}
\begin{aligned}
 &  A_T   \Big( F(x_{T+1}^{ag}) - F_\star \Big) \leq    D_{\psi}( x_\star ,x_1)   + \mathcal{O}\Big(   \alpha^{\frac{q}{q-\kappa}}    \frac{  L ^{1+1/r} }{ \mu^{1/r}}   \log (T) \Big).
\end{aligned}
\end{equation}
To find the optimal step-size, we choose
\begin{align*}
\beta  =  q-\kappa + \frac{q-\kappa}{q}, \quad 
\gamma  \geq 0, \quad \Big( \frac{\eta_t}{\alpha_t}\Big)^{q_\star - 1}  = \frac{1}{\tau(q-\beta)}  t^{\gamma(q_\star - 1)} 
\end{align*}
and recall that $q-\beta = \frac{q\kappa + \kappa - q}{q} > 0$, $1+ \frac{1}{r} = 1 + \frac{\kappa}{q-\kappa} = \frac{q}{q-\kappa}$ and $\frac{q_\ast}{q} = \frac{1}{q-1}$. Then it follows
\begin{align*}
   B_t \leq  \alpha^{\frac{q}{q-\kappa}} \frac{ 1}{t}  \frac{M^{1/r} }{ \mu^{1/r}} L ^{1+1/r},  
\end{align*}
leading to 
\begin{align*}
 \frac{   \eta_t^{q_{\ast}}}{\alpha_t^{q_{\ast}-1-1/r}}  \frac{M^{1/r} }{\mu^{1/r}} L ^{1+1/r} \leq  \alpha^{\frac{q}{q-\kappa}} \frac{ 1}{t}  \frac{M^{1/r} }{ \mu^{1/r}} L ^{1+1/r}
 \end{align*}
and thus 
 \begin{align*}
\alpha_t^{1+ 1/r}    \leq  \Big(\tau (q-\beta) \Big)^{\frac{q_\star}{q_\star - 1 }} \frac{ \alpha^{\frac{q}{q-\kappa}}}{t^{\gamma q_\ast + 1}}     \Leftrightarrow & \quad \alpha_t \leq   \Big(\tau (q-\beta) \Big)^{q-\kappa} \frac{\alpha}{t^{\frac{\gamma(q-\kappa)}{q-1}  + \frac{q-\kappa}{q}} }.
\end{align*}
We further note that 
\begin{align*}
A_t- A_{t-1}  =   \Big( \frac{\eta_t^{q_{\ast}}}{\alpha_t^{q_{\ast}-1}} - \frac{\eta_{t-1}^{q_{\ast}}}{\alpha_{t-1}^{q_{\ast}-1}} \Big) =   \left( \eta_t \Big( \frac{\eta_t}{\alpha_t}\Big)^{q_\star - 1} - \eta_{t-1} \Big( \frac{\eta_{t-1}}{\alpha_{t-1}}\Big)^{q_\star - 1} \right)   \leq \frac{ \eta_t}{\tau} 
\end{align*}
is equialent to
 \begin{align*}
\frac{A_t- A_{t-1} }{\eta_t} = \frac{1}{\tau(q-\beta)} \Big( t^{\gamma(q_\ast - 1)}   - \frac{\eta_{t-1}}{\eta_t} (t-1)^{\gamma(q_\ast - 1)}  \Big) \leq \frac{1}{\tau}.
\end{align*}
Intuitively, choosing $\eta_t$ as a polynomial will ensure that  $\frac{\eta_{t-1}}{\eta_t}$ tends to $1$.  Fixing the parameters in the following way satisfies our condition
\begin{align*}
\gamma & := q-1 \\
\alpha_t &:=     \Big(\tau (q-\beta) \Big)^{q-\kappa} \frac{\alpha}{t^\beta}   \\
\eta_t & := \alpha_t \Big(\frac{1}{\tau(q-\beta)}t^{\gamma(q_\star - 1)}  \Big)^{\frac{1}{q_\star - 1}} = \alpha_t \Big(\frac{t}{\tau(q-\beta)} \Big)^{q-1}.
\end{align*}
Note that $\gamma(q_\ast - 1) = 1 $.  We verify that
\begin{align*}
 \frac{\eta_{t-1}}{\eta_t} = & \frac{\alpha_{t-1}}{\alpha_t} \Big(1 - \frac{1}{t}\Big)^{q-1} = \Big(1 - \frac{1}{t}\Big)^{q-1-\beta} = \Big(\frac{t-1}{t}\Big)^{q -1 - \beta} 
 \end{align*}
 and
 \begin{align*}
\frac{A_{t} - A_{t-1}}{\eta_t} = &  \frac{1}{\tau(q-\beta)}  \Big( t   - \frac{\eta_{t-1}}{\eta_t} (t-1) \Big) \\
= &  \frac{1}{\tau(q-\beta)} \frac{1}{t^{q-1-\beta}} \Big( t^{q-\beta}   - (t-1)^{q-\beta} \Big) \\
= &  \frac{1}{\tau(q-\beta)} \frac{1}{t^{q-1-\beta}} \Big( \int_{t-1}^{t} (q-\beta) s^{q -\beta - 1} ds \Big)  \\
\leq &  \frac{1}{\tau} \frac{1}{t^{q-\beta-1}} \cdot t^{q-1-\beta} = \frac{1}{\tau}
\end{align*}
By choosing $\epsilon_t: = \frac{B_t}{\eta_t} $ such that $\eta_t \epsilon_t \leq B_t $, this leads to
\begin{align*}
A_t & = \frac{  \eta_t^{q_{\ast}}}{\alpha_t^{q_{\ast}-1}}   =    \Big( \frac{\eta_t}{\alpha_t} \Big)^{q_\ast} \alpha_t = \Big(\frac{t}{\tau(q-\beta)  }   \Big)^{\frac{q_\star}{q_\star - 1}}  \alpha_t  \\
& \propto  \Big( \frac{t}{  \tau} \Big)^{q} \cdot   \frac{\alpha}{t^{\beta}} =  \alpha t^{q-\beta}\cdot \frac{1}{\tau^q}  \\
A_t & \propto \frac{\alpha}{\tau^{q}} t^{\frac{\kappa q + \kappa - q}{q}  }
\end{align*}
and finally shows 
\begin{align*}
   F(x_{T+1}^{ag}) - F_\star  & = \mathcal{O}_{q,\kappa} \Big(   \frac{  \tau^q D_{\psi}( x_\star ,x_1) }{\alpha T^{\frac{\kappa q + \kappa - q}{q}  }}   +      \frac{ \alpha^{\frac{\kappa}{q-\kappa}} L ^{\frac{q}{q-\kappa}} }{ \mu^{\frac{\kappa}{q-\kappa}}} \frac{ \tau^q  \log (T)  }{T^{\frac{\kappa q + \kappa - q}{q}  } }  \Big),
\end{align*}
which proves Theorem~\ref{thm-p>2}.

\section{Proof for bounded iterate}
\begin{theorem}
Let's assume that $D_{\psi}(x_\star,x_1) \leq B $. Under assumption~\ref{ass}, we also suppose that we are running Algorithm~\ref{algo-acc} with parameters tuned such that there are base $K$ and exponents $n_1,n_2 \geq 0$ (possibly depending on $(L,\tau,\mu)$) ensuring $K^{n_1} > B$ and for all $t \geq 1$
\begin{align*}
  \max \left(  \Big(\frac{qL\eta_t}{\kappa} \Big)^{\frac{1}{q-1}} , \Big( \frac{L\alpha_t}{\kappa} \Big)^{\frac{1}{q-1}}   \right)\leq  K^{n_1}  t^{n_2}
  .
\end{align*}
If we write $R_t = \max\{ \| x_t - x_\star\|, \| x_t^{ag} - x_\star \|, \| x_t^{md} - x_\star \|\}$, then for all $t \geq 1$
\[ R_t = \mathcal{O} \Big( K^{\frac{n_1(q-1)}{q-\kappa}} t^{\frac{(q-1)(n_2 + 1)}{q-\kappa}} \Big)\]
\end{theorem}

\begin{proof}
We recall from our Algorithm~\ref{algo-acc} that
\begin{align*}
x_{t+1}  & = \arg \min_{x \in \mathbb{R}^d} \{ \eta_t \langle \nabla F(x_{t}^{md}), x  \rangle +  D_\psi(x,x_t) \} \\ x_{t+1}^{ag} &  = \arg \min_{x \in \mathbb{R}^d}  \{ \alpha_t \langle \nabla F(x_{t}^{md}), x  \rangle +  \frac{\| x - x_t^{md} \|^q}{q}  \}    
\end{align*}
Since our setting is unconstrained, the (sub-)gradients have to cancel :
\begin{align*}
\eta_t \nabla F(x_t^{md}) + \nabla \psi(x_{t+1}) - \nabla \psi(x_t) = 0 \\
\alpha_t \nabla F(x_t^{md}) + \phi_q (x_{t+1}^{ag} - x_t^{md}) = 0 
\end{align*}
with $\psi(x) \propto \frac{\| x\|^q}{q}$ and $\phi_q(x) = \nabla \Big( \frac{\|x \|^q}{q}\Big)$.
By the weak smoothness property of $F$:
\begin{align*}
\| \nabla F(x_t^{md}) \|_{\ast} \leq \frac{L}{\kappa}\|x_t^{md} - x_{\star} \|^{\kappa-1}.
\end{align*}
Additionally, from the uniform convexity of $\psi$ and regularity of $\| \cdot \|$, it follows that:
\begin{align*}
\frac{\| x_{t+1} - x_t \|^q}{q} \leq D_{\psi}(x_t, x_{t+1}) + D_{\psi}(x_{t+1}, x_{t})& = \langle \nabla \psi(x_{t+1}) - \nabla \psi(x_t),x_{t+1}- x_t \rangle.
\end{align*}
Applying the optimality condition for $x_{t+1}$, this becomes
\begin{align*}
    \frac{\| x_{t+1} - x_t \|^q}{q} \leq \eta_t \langle \nabla F(x_t^{md}), x_t - x_{t+1} \rangle
\end{align*}
and by applying Cauchy-Schwartz, we obtain:
\begin{align*}
   \frac{\| x_{t+1} - x_t \|^q}{q} \leq \eta_t   \| \nabla F(x_t^{md}) \|_{\ast} \times \|  x_t - x_{t+1} \|, 
\end{align*}
where $\|\cdot\|_\ast$ is the dual norm associated with $\|\cdot\|$.
Similarly for $x_{t+1}^{ag}-x_t^{md}$, we use the definition of $\phi_q$ and obtain from Lemma~\ref{proof-regular-lemma} that

\begin{align*}
\| x_{t+1}^{ag} - x_{t}^{md} \|^q & =  \langle \phi_q(x_{t+1}^{ag} - x_{t}^{md}) , x_{t+1}^{ag} - x_{t}^{md} \rangle \\
& \leq \alpha_t   \| \nabla F(x_t^{md}) \|_{\ast} \times \|  x_{t+1}^{ag} - x_{t}^{md} \|.
\end{align*}
By plugging in these results in the previous conditions, we derive the following bounds:
\begin{align*}
\|x_{t+1} - x_t \|^{q-1}  \leq \frac{qL\eta_t}{\kappa}\|x_t^{md} - x_{\star} \|^{\kappa-1} \\
\| x_{t+1}^{ag} - x_{t}^{md} \|^{q-1}  \leq \frac{L\alpha_t}{\kappa}\|x_t^{md} - x_{\star} \|^{\kappa-1}.
\end{align*}
From the triangle inequality, we then obtain :
\begin{align*}
\| x_{t+1} - x_\star \| & \leq \|x_t - x_\star\| + \| x_{t+1} - x_t \| \\
& \leq \|x_t - x_\star\| + \Big(  \frac{qL\eta_t}{\kappa} \Big)^{\frac{1}{q-1}} \|x_t^{md} - x_{\star} \|^{\frac{\kappa-1}{q-1}}
\end{align*}
and 
\begin{align*}
\| x_{t+1}^{ag} - x_\star \| & \leq \|x_t^{md} - x_\star\| + \Big(  \frac{L\alpha_t}{\kappa} \Big)^{\frac{1}{q-1}} \|x_t^{md} - x_{\star} \|^{\frac{\kappa-1}{q-1}}.
\end{align*}
Since $x_{t+1}^{md}$ is a middle point, we can also deduce that 
\begin{align*}
\| x_{t+1}^{md} - x_\star  \| & = \| \lambda_{t+1} (x_{t+1}^{ag} - x_\star) + (1- \lambda_{t+1} )(x_{t+1} - x_\star) \| \\
& \leq \lambda_{t+1 }\| x_{t+1}^{ag} - x_\star\| + (1- \lambda_{t+1} )\|x_{t+1} - x_\star \| \\
& \leq \max \Big( \| x_{t+1}^{ag} - x_\star\|, \|x_{t+1} - x_\star \| \Big) 
\end{align*}
From the above inequality and using the assumption on $(\alpha_t,\eta_t)$ it follows that:
\[ R_{t+1} \leq R_t +  K^{n_1} t^{n_2}  R_t^{\frac{\kappa - 1}{q-1}}.\]
This recursive inequality shows that the growth is at most polynomial, implying that the sequence $R_t$ cannot grow faster than a polynomial rate.

\bigskip\noindent
We will show by induction that $R_t \leq K^{S_t}  t^{M_2} $, where $M_2 := \max\Big\{ \frac{(q-1)(n_2 + 1)}{q-\kappa},1\Big\}$ and $S_t := n_1 \sum_{i=0}^{t-1} \Big( \frac{\kappa -1}{q-1} \Big)^i $ is a geometric sum. First, we notice that
\begin{align*}
 n_2 + 1 & \leq  \frac{q-\kappa}{q-1} M_2 \\
\implies   n_2 + M_2 \frac{\kappa - 1}{q-1} & \leq M_2 - 1 \\
\implies  t^{n_2 + M_2 \frac{\kappa - 1}{q-1}} & \leq t^{M_2 - 1} \leq (t+1)^{M_2} - t^{M_2}.
\end{align*}
Then, by induction it then follows that
\begin{align*}
R_{t+1} \leq R_t +  K^{n_1} t^{n_2}  R_t^{\frac{\kappa - 1}{q-1}} & \leq  K^{S_t}  t^{M_2} +  K^{n_1 + S_t \times \frac{\kappa-1}{q-1}} t^{n_2 + M_2 \frac{\kappa - 1}{q-1}}  \\
& \leq K^{S_{t+1}}  \Big( t^{M_2 } + t^{n_1 + M_2 \frac{\kappa - 1}{q-1}} \Big) \\
& \leq K^{S_{t+1}} (t+1)^{M_2}
\end{align*}
Since $S_t < \frac{n_1}{1 - \frac{\kappa- 1}{q-1}} = \frac{n_1(q-1)}{q-\kappa}$ , we can simplify $R_t = \mathcal{O} \Big( K^{\frac{n_1(q-1)}{q-\kappa}} t^{\frac{(q-1)(n_2 + 1)}{q-\kappa}} \Big)$.
\end{proof}

\section{Proof for binary search}
Before proving Theorem~\ref{thm-binary-search}, we make two quick observations.
\begin{lemma}
\label{lemma-binary-search}
Let $F : \mathbb{R}^d \xrightarrow{} \mathbb{R}$ be a $(L, \kappa)$-weakly smooth function. Define $g:\mathbb{R} \to \mathbb{R}$ for $a,b \in \mathbb{R}^d$ by
\[g(\lambda) = F(a + \lambda b ). \]
Then for any $\lambda_1, \lambda_2$, we have 
\begin{align*}
|g'(\lambda_1) - g'(\lambda_2)| \leq \frac{2L}{\kappa} \|b \|^\kappa |\lambda_1 - \lambda_2|^{\kappa}.
\end{align*}
\end{lemma}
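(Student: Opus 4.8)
The plan is to pull everything back to the one-dimensional restriction $g$ and then read off the increment $g'(\lambda_1)-g'(\lambda_2)$ from a symmetrised Bregman identity. Since $F$ is continuously differentiable, so is $g$, with $g'(\lambda)=\langle\nabla F(a+\lambda b),b\rangle$. The first step is the elementary observation that the Bregman divergence of $g$ coincides with that of $F$ along the segment $s\mapsto a+sb$: writing $x_i:=a+\lambda_i b$ and using $x_1-x_2=(\lambda_1-\lambda_2)b$, the definition of the Bregman divergence gives
\[ D_g(\lambda_1,\lambda_2)=g(\lambda_1)-g(\lambda_2)-g'(\lambda_2)(\lambda_1-\lambda_2)=F(x_1)-F(x_2)-\langle\nabla F(x_2),x_1-x_2\rangle=D_F(x_1,x_2). \]
Hence the $(L,\kappa)$-weak smoothness of $F$ (Definition~\ref{def:weak smooth}) transfers verbatim to $g$: $|D_g(\lambda_1,\lambda_2)|\le\frac{L}{\kappa}\|x_1-x_2\|^\kappa=\frac{L}{\kappa}\|b\|^\kappa|\lambda_1-\lambda_2|^\kappa$, and the same bound holds with $\lambda_1$ and $\lambda_2$ interchanged.

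Next I would invoke the three-points identity \cite{CT93}, specialised to the differentiable scalar function $g$,
\[ D_g(\lambda_1,\lambda_2)+D_g(\lambda_2,\lambda_1)=\big(g'(\lambda_1)-g'(\lambda_2)\big)(\lambda_1-\lambda_2), \]
which is obtained by adding the two definitions and cancelling the $g(\lambda_i)$ terms. Taking absolute values and inserting the two weak-smoothness bounds from the previous step yields
\[ \big|g'(\lambda_1)-g'(\lambda_2)\big|\,|\lambda_1-\lambda_2|\le|D_g(\lambda_1,\lambda_2)|+|D_g(\lambda_2,\lambda_1)|\le\frac{2L}{\kappa}\|b\|^\kappa|\lambda_1-\lambda_2|^\kappa, \]
and dividing through by $|\lambda_1-\lambda_2|$ (the case $\lambda_1=\lambda_2$ being trivial, both sides vanishing) gives the claimed inequality.

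I do not anticipate a genuine obstacle: the argument is a one-variable reduction followed by a two-line identity, and it is the weakly smooth counterpart of the Lipschitz-gradient computation used for the binary search in \cite{pmlr-v125-hinder20a}, which is recovered by setting $\kappa=2$. The only points requiring care are the bookkeeping in the identity $D_g=D_F$ — in particular that the linear term $g'(\lambda_2)(\lambda_1-\lambda_2)$ matches $\langle\nabla F(x_2),x_1-x_2\rangle$ under the substitution $x_1-x_2=(\lambda_1-\lambda_2)b$ — and tracking the factor $2$ produced by symmetrising the Bregman divergence, which is precisely why the constant is $\tfrac{2L}{\kappa}$ rather than $\tfrac{L}{\kappa}$. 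I would also state explicitly that $1<\kappa\le 2$ is inherited from Definition~\ref{def:weak smooth} and that $g$ and $g'$ are well defined for all real arguments because $F$ is defined on all of $\mathbb{R}^d$.
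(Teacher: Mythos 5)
Your route is essentially the paper's: symmetrise the Bregman divergence along the segment, bound both halves by weak smoothness, and read off the gradient increment from the identity $D_g(\lambda_1,\lambda_2)+D_g(\lambda_2,\lambda_1)=\big(g'(\lambda_1)-g'(\lambda_2)\big)(\lambda_1-\lambda_2)$. Up to that point your bookkeeping is correct, and in fact more careful than the paper's own proof, which silently drops the factor $(\lambda_1-\lambda_2)$ when it rewrites the Bregman sum as $\langle\nabla F(a+\lambda_1 b)-\nabla F(a+\lambda_2 b),\,b\rangle$.

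The gap is in your last line. Dividing
\[
\big|g'(\lambda_1)-g'(\lambda_2)\big|\,|\lambda_1-\lambda_2| \;\le\; \frac{2L}{\kappa}\,\|b\|^{\kappa}\,|\lambda_1-\lambda_2|^{\kappa}
\]
by $|\lambda_1-\lambda_2|$ yields the exponent $\kappa-1$, not $\kappa$, so it does not give ``the claimed inequality.'' Moreover the claimed bound cannot be rescued, because it is false as stated: take $F(x)=\tfrac12\|x\|_2^2$ (so $\kappa=2$, $L=1$), $a=0$ and $\|b\|_2=1$; then $|g'(\lambda_1)-g'(\lambda_2)|=|\lambda_1-\lambda_2|$, which exceeds the claimed bound $|\lambda_1-\lambda_2|^{2}$ whenever $0<|\lambda_1-\lambda_2|<1$. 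What your argument actually establishes is the H\"older bound $|g'(\lambda_1)-g'(\lambda_2)|\le\frac{2L}{\kappa}\|b\|^{\kappa}|\lambda_1-\lambda_2|^{\kappa-1}$, which is the correct statement; the paper's proof proves the same thing despite displaying the exponent $\kappa$, and the exponent should be corrected both in the lemma and where it is invoked in the proof of Theorem~\ref{thm-binary-search} (this changes only the exponents inside the logarithmic iteration count there). So: same approach as the paper, but your final division step overclaims, and you should state the $\kappa-1$ version rather than the lemma's $\kappa$ version.
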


\begin{proof}
We recall that :
\[ g'(\lambda) = \langle \nabla F(a+\lambda b), b \rangle \]
and for any $\lambda_1, \lambda_2$, by weak smoothness:
\begin{align*}
- \frac{2 L}{\kappa} \| \lambda_1 b - \lambda_2 b \|^\kappa \leq D_F(a+\lambda_1 b ,a+\lambda_2 b  ) + D_F (a+\lambda_2 b ,a+\lambda_1 b) ) \leq \frac{2L}{\kappa} \| \lambda_1 b - \lambda_2 b \|^\kappa \\
- \frac{2L}{\kappa} \| \lambda_1 b - \lambda_2 b \|^\kappa \leq  \langle \nabla F( a + \lambda_1 b) - \nabla F( a + \lambda_2 b),b \rangle \leq \frac{2L}{\kappa} \| \lambda_1 b - \lambda_2 b \|^\kappa 
\end{align*}
By taking the absolute value :
\begin{align*}
  |g'(\lambda_1) - g'(\lambda_2)| \leq \frac{2L}{\kappa} \| \lambda_1 b - \lambda_2 b \|^\kappa = \frac{2L}{\kappa} \|b \|^\kappa |\lambda_1 - \lambda_2|^\kappa.
\end{align*}
\end{proof}

We notice that, since we use the binary search in the segment $[0,1]$ (i.e. $g:[0,1] \xrightarrow{} \mathbb{R}$), $F$ being $(L,\kappa)$-weakly smooth implies that $g'$ is $\frac{2L}{\kappa} \|x_t - x_t^{ag} \|^\kappa $-Lipschitz.

\bigskip\noindent
\begin{lemma}
\label{lemma-binary-search-2}
Consider $a<b$ and let $g :\mathbb{R} \xrightarrow{} \mathbb{R} $ have a continuous derivative. Then
\[ g(a),g'(b) > 0 \geq g(b) ~\implies~ \exists \lambda_\star \in [a,b], \quad g(\lambda_\star) \leq 0, g'(\lambda_\star)=0 \]
\end{lemma}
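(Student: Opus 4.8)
The plan is to prove the statement by a direct application of the intermediate value theorem and the mean value theorem, exploiting the continuity of $g$ and $g'$. First I would observe that since $g(a) > 0 \geq g(b)$ and $g$ is continuous on $[a,b]$, the set $S := \{\lambda \in [a,b] : g(\lambda) \leq 0\}$ is nonempty (it contains $b$) and closed. Let $\lambda_\star := \inf S$. By continuity $g(\lambda_\star) \leq 0$, and since $g(a) > 0$ we have $\lambda_\star > a$, so $\lambda_\star \in (a,b]$. I claim $\lambda_\star$ is the point we want.

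The key step is to show $g'(\lambda_\star) = 0$. By the definition of $\lambda_\star$ as the infimum, $g(\lambda) > 0$ for all $\lambda \in [a, \lambda_\star)$, which together with $g(\lambda_\star) \leq 0$ forces $g(\lambda_\star) = 0$ (a strictly positive function cannot have a nonpositive limit from the left while the limit equals $g(\lambda_\star)$ by continuity — more precisely, $g(\lambda_\star) = \lim_{\lambda \uparrow \lambda_\star} g(\lambda) \geq 0$, and combined with $g(\lambda_\star) \le 0$ gives equality). Now, since $g(\lambda) > 0 = g(\lambda_\star)$ for $\lambda$ slightly less than $\lambda_\star$, the left derivative satisfies $g'(\lambda_\star) = \lim_{\lambda \uparrow \lambda_\star} \frac{g(\lambda_\star) - g(\lambda)}{\lambda_\star - \lambda} \leq 0$. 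For the reverse inequality I would split on whether $\lambda_\star = b$ or $\lambda_\star < b$: if $\lambda_\star = b$ then $g'(\lambda_\star) = g'(b) > 0$, contradicting $g'(\lambda_\star) \le 0$, so in fact this case cannot occur and $\lambda_\star \in (a,b)$; then since $g$ attains a local maximum value $0$ at an interior point from the left but must also satisfy $g(\lambda) \le 0$ just to the right (as $\lambda_\star = \inf S$ only controls the left side, I instead argue directly that $\lambda_\star$ is a local minimum of... ) — here care is needed.

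Let me restructure that last point, which is the main obstacle: controlling the behaviour of $g$ to the \emph{right} of $\lambda_\star$. Cleaner is to instead define $\lambda_\star := \sup\{\lambda \in [a,b] : g(\lambda) \geq 0 \text{ on } [a,\lambda]\}$, i.e.\ the last point up to which $g$ has stayed nonnegative; equivalently the left endpoint of the "first crossing". Actually the robust choice is: let $\lambda_\star$ be the largest zero of $g$ in $[a,b]$ that has $g \ge 0$ immediately to its left. Concretely, since $g(b) \le 0 < g'(b)$, $g$ is strictly increasing near $b$, so $g < 0$ just left of $b$; let $c := \sup\{\lambda \in [a,b) : g(\lambda) \ge 0\}$, which is well-defined since $g(a) > 0$, and set $\lambda_\star := c$. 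Then $g(\lambda_\star) = 0$ by continuity (limits from both sides are $\ge 0$ and $\le 0$ respectively), $g \le 0$ on $[\lambda_\star, b]$ near... no — on $(\lambda_\star, b)$ we have $g < 0$ is false in general. The genuinely correct move: take $\lambda_\star$ to be a \emph{minimizer} of $g$ over the interval $[a', b]$ where $a'$ is chosen with $g(a') = 0$; since $g'(b) > 0$, $b$ is not the minimizer, and since $g$ dips to $\le 0$ below the value $g(a') = 0$, the minimizer is interior, hence $g'(\lambda_\star) = 0$ by Fermat, and $g(\lambda_\star) \le g(b) \le 0$. This is the clean argument, and I would present it that way: (1) pick $a' \in [a,b)$ with $g(a')=0$ via IVT on $[a,b]$ using $g(a)>0\ge g(b)$ — wait, need $a'$ where crossing happens; use $a' := \sup\{\lambda : g(\lambda) \ge 0\} \cap [a,b]$...

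I will commit to this version for the writeup: Let $\lambda_\star \in [a,b]$ minimize the continuous function $g$ over the compact interval $[a,b]$. Since $g'(b) > 0$, $g$ is not minimized at $b$. Since $g(b) \le 0 < g(a)$, $g$ is not minimized at $a$ either (as $g(a) > 0 \ge g(b) \ge g(\lambda_\star)$ would be needed — indeed $g(\lambda_\star) \le g(b) \le 0 < g(a)$). Hence $\lambda_\star \in (a,b)$ is an interior minimizer, so $g'(\lambda_\star) = 0$ by Fermat's rule, and $g(\lambda_\star) \le g(b) \le 0$. This handles everything in a few lines; the only subtlety to double-check is that "$g$ not minimized at $b$" follows rigorously from $g'(b) > 0$ (points slightly left of $b$ have strictly smaller $g$-value), which is immediate from the definition of the derivative.
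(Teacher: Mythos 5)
Your final committed argument (take $\lambda_\star$ to be a global minimizer of $g$ on the compact interval $[a,b]$, rule out $b$ via $g'(b)>0$ and $a$ via $g(\lambda_\star)\le g(b)\le 0<g(a)$, then apply Fermat's rule at the interior minimizer) is correct and complete, but it is a genuinely different route from the paper's. The paper instead sets $\lambda_\star = \sup\{x : g'(x)\le 0\}$, shows this set is nonempty via the mean value theorem (since $g(b)-g(a)<0$ forces $g'<0$ somewhere), concludes $g'(\lambda_\star)=0$ from continuity of $g'$ together with $g'(b)>0$, and obtains $g(\lambda_\star)\le g(b)\le 0$ because $g'>0$ on $(\lambda_\star,b]$ makes $g$ increasing there. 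Your approach buys two things: the inequality $g(\lambda_\star)\le 0$ falls out immediately from minimality rather than requiring a separate monotonicity argument to the right of $\lambda_\star$ (which the paper leaves implicit), and you never actually use continuity of $g'$ — mere differentiability suffices for Fermat's rule. The paper's approach, in exchange, produces the \emph{last} stationary point before $b$, which is closer in spirit to how the lemma is later invoked in the binary-search analysis, though nothing in the statement requires that. One editorial remark: the first two-thirds of your writeup consists of abandoned attempts (the infimum of $\{g\le 0\}$, the supremum of $\{g\ge 0\}$) whose difficulty — controlling $g$ to the right of the candidate point — you correctly diagnosed before discarding them; only the final paragraph should survive into a clean proof.
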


\begin{proof}
Let us pick $\lambda_\star = \sup \{\lambda \in [a,b] \mid g'(\lambda) \leq 0 \}$. That search set is not empty because due to Taylor-Lagrange, as $g'$ is continuous, there exists $\lambda \in [a,b]$ such that $g'(\lambda) = \frac{g(b) - g(a)}{b-a} < 0$. By continuity of $g$ and $g'$, we conclude that $g(\lambda_\star) \leq 0, g'(\lambda_\star) = 0$.
\end{proof}
Now we are ready to prove Theorem~\ref{thm-binary-search} on the complexity of the binary search.
\begin{theorem}
For fixed $(C_t,\epsilon_t,x_t,x_t^{ag})$, the binary search Algorithm~\ref{algo-binary-search} finds a $\lambda$ such that
\begin{align}
\label{cond}
x_{t}^{md} & = \lambda x_{t}^{ag} + (1-\lambda) x_{t} \notag\\
 \langle \nabla F(x_t^{md}),x_t^{md} - x_{t}  \rangle 
& +  C_t  \Big( F(x_t^{md}) - F(x_t^{ag})  \Big) \leq \epsilon_t  
\end{align}
in $\mathcal{O}_{\kappa}\left(\max\{ \log(C_t),  \log \Big( \frac{ L\|x_t - x_t^ {ag} \|^\kappa }{ \epsilon_t }\Big) \} + 1 \right) $ iterations.
\end{theorem}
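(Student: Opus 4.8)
The plan is to follow the three-part structure sketched after the theorem, using the abbreviation $g(\lambda) = F(\lambda x_t^{ag} + (1-\lambda) x_t) - F(x_t^{ag})$, so that condition \eqref{cond} becomes exactly $\lambda g'(\lambda) + C_t g(\lambda) \le \epsilon_t$ (using that $x_t^{md} - x_t = \lambda(x_t^{ag} - x_t)$, hence $\langle \nabla F(x_t^{md}), x_t^{md} - x_t\rangle = \lambda g'(\lambda)$, and $F(x_t^{md}) - F(x_t^{ag}) = g(\lambda)$). First I would dispose of the two easy early-exit cases: if $g'(1) \le \epsilon_t$ we return $\lambda = 1$ and are done, and if $C_t g(0) \le \epsilon_t$ we return $\lambda = 0$; so from now on assume $g'(1) > \epsilon_t > 0$ and $C_t g(0) > \epsilon_t$, in particular $g(0) > 0$. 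If $g(1) < 0$ then by continuity there is a zero of $g$ strictly inside $(0,1)$, but the more useful fact (Lemma~\ref{lemma-binary-search-2}) is that from $g(0) > 0$, $g'(1) > 0$, and $g(1) \le 0$ one gets a point $\lambda_\star \in (0,1)$ with $g(\lambda_\star) \le 0$ and $g'(\lambda_\star) = 0$; and if instead $g(1) > 0$ while $g'(1) > \epsilon_t$, since $g(0) > \epsilon_t / C_t > 0$ and $g$ is not yet satisfying the condition, one still locates via the sign change of $g'$ a critical point $\lambda_\star$ with $g'(\lambda_\star) = 0$ and $g(\lambda_\star) \le g(0)$ — more carefully, I would argue that the binary search invariant ($g(a) > 0 \ge g(b)$, or an analogous bracket on $g'$) is maintained, and that the limit point of the bracketing interval is such a $\lambda_\star$. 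This is essentially Step~1.

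For Step~2, I would quantify a neighborhood of $\lambda_\star$ on which \eqref{cond} holds. Using Lemma~\ref{lemma-binary-search} with $a = x_t$, $b = x_t^{ag} - x_t$, for any $\lambda \in [\lambda_\star - \delta, \lambda_\star]$ we have $|g'(\lambda)| = |g'(\lambda) - g'(\lambda_\star)| \le \tfrac{2L}{\kappa}\|x_t^{ag} - x_t\|^\kappa \delta^\kappa$, and integrating, $g(\lambda) - g(\lambda_\star) = \int_{\lambda_\star}^{\lambda} g'(s)\,ds \le \tfrac{2L}{\kappa}\|x_t^{ag}-x_t\|^\kappa \delta^{\kappa+1}/(\kappa+1) \le \tfrac{2L}{\kappa}\|x_t^{ag}-x_t\|^\kappa \delta^\kappa$ (on $[\lambda_\star-\delta,\lambda_\star]$, $g$ is nonincreasing near $\lambda_\star$ since $g'(\lambda_\star)=0$ and $g'$ was negative just before; so actually $g(\lambda) \ge g(\lambda_\star)$ fails — I should instead bound $g(\lambda) \le g(\lambda_\star) + \tfrac{2L}{\kappa}\|x_t^{ag}-x_t\|^\kappa\delta^{\kappa}$ crudely using $|g(\lambda)-g(\lambda_\star)| \le \max_s|g'(s)|\cdot\delta$). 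Then $\lambda g'(\lambda) + C_t g(\lambda) \le \delta \cdot \tfrac{2L}{\kappa}\|x_t^{ag}-x_t\|^\kappa\delta^\kappa + C_t\big(g(\lambda_\star) + \tfrac{2L}{\kappa}\|x_t^{ag}-x_t\|^\kappa \delta^\kappa\big)$; since $g(\lambda_\star) \le 0$, choosing $\delta = \Theta\big((\tfrac{\epsilon_t}{C_t L \|x_t^{ag}-x_t\|^\kappa})^{1/\kappa}\big)$ makes the right side $\le \epsilon_t$. So the whole interval $[\lambda_\star - \delta, \lambda_\star]$ satisfies \eqref{cond} with this $\delta$.

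For Step~3, the bisection on $[a,b] \subseteq [0,1]$ halves the interval each round while keeping $\lambda_\star$ inside it (the sign test on $g(\lambda_t)$ preserves the bracket: $g(a) > 0 \ge g(b)$ so $\lambda_\star$ stays bracketed). After $k$ rounds the interval has length $2^{-k}$; once $2^{-k} \le \delta$ the current midpoint lies within $\delta$ of $\lambda_\star$ — one has to check it lands in $[\lambda_\star - \delta, \lambda_\star]$ rather than $[\lambda_\star, \lambda_\star + \delta]$, which I would handle either by noting the bracket $[a,b]$ always has $\lambda_\star$ in its lower part once $g(b)\le 0$, or by enlarging the "good" interval to two-sided $[\lambda_\star-\delta,\lambda_\star+\delta]$ and redoing the Step~2 estimate symmetrically (harmless, since $g(\lambda_\star) \le 0$ still kills the dangerous term). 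So the loop terminates as soon as $2^{-k} \le \delta$, i.e.\ after $k = \lceil \log_2(1/\delta)\rceil = \mathcal{O}\big(\tfrac{1}{\kappa}\log\tfrac{C_t L\|x_t^{ag}-x_t\|^\kappa}{\epsilon_t}\big) = \mathcal{O}_\kappa\big(\log C_t + \log\tfrac{L\|x_t^{ag}-x_t\|^\kappa}{\epsilon_t}\big)$ rounds, plus the $O(1)$ for the early-exit checks. The main obstacle I anticipate is the bookkeeping in Steps~1 and~3 to show that bisection actually keeps $\lambda_\star$ (or the good interval) bracketed and that the two-sided-versus-one-sided issue is benign; the smoothness estimate in Step~2 is routine given Lemma~\ref{lemma-binary-search}.
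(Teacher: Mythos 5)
Your plan follows the paper's proof almost step for step: the same reduction to $g$, the same use of Lemma~\ref{lemma-binary-search-2} to produce a critical point $\lambda_\star$ with $g(\lambda_\star)\le 0$ and $g'(\lambda_\star)=0$, the same H\"older estimate on $g'$ (Lemma~\ref{lemma-binary-search}) to show that a $\delta$-neighbourhood of $\lambda_\star$ satisfies the stopping condition, and the same $\log(1/\delta)$ iteration count. One simplification for your Step~1: since $g(1)=F(x_t^{ag})-F(x_t^{ag})=0$ identically, your case split on the sign of $g(1)$ is vacuous, and the hypotheses of Lemma~\ref{lemma-binary-search-2} ($g(0)>0$, $g'(1)>0$, $g(1)\le 0$) hold immediately once both early exits fail.

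The one piece you explicitly defer --- maintenance of the bracket --- is where the real content of Step~3 lies, and the invariant you name ($g(a)>0\ge g(b)$) is not by itself sufficient: a strictly decreasing $g$ with $g(a)>0>g(b)$ has no critical point, so you could not re-invoke Lemma~\ref{lemma-binary-search-2} on the shrunken interval. The invariant that works is $g(a)>0$ together with $g'(b)>0\ge g(b)$, and the missing observation is that whenever the loop reaches the branching step the stopping test has just failed, i.e.\ $\lambda_t g'(\lambda_t)+C_t g(\lambda_t)>\epsilon_t> 0$; hence on the branch with $g(\lambda_t)\le 0$ (which sets $b=\lambda_t$) one must have $g'(\lambda_t)>0$, so the new right endpoint again satisfies $g'(b)>0\ge g(b)$ and the set $\{\lambda\in[a,b]: g(\lambda)\le 0,\ g'(\lambda)=0\}$ stays nonempty at every iteration. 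Your remaining slips only affect constants: the factor in $\lambda g'(\lambda)\le\lambda|g'(\lambda)|$ should be $\lambda\le 1$ rather than $\lambda\le\delta$, and the clean choice is $\delta=\min\{1/C_t,(\kappa\epsilon_t/(4L\|x_t-x_t^{ag}\|^\kappa))^{1/\kappa}\}$ so that the combined bound $\frac{2L}{\kappa}\|x_t-x_t^{ag}\|^\kappa\delta^\kappa(1+C_t\delta)$ is at most $\epsilon_t$; either way $\log(1/\delta)$ gives the claimed $\mathcal{O}_{\kappa}$ rate. Your two-sided enlargement of the good interval to $[\lambda_\star-\delta,\lambda_\star+\delta]$ is legitimate (the H\"older estimate is symmetric and only $g(\lambda_\star)\le 0$, $g'(\lambda_\star)=0$ are used) and in fact tidies up a point the paper's own proof treats loosely, namely that the algorithm tests midpoints rather than the left endpoint $a$.
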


\begin{proof} \label{appendix-binary-search-proof}
Set
\[ g(\lambda) := F(x_t^{md}) - F(x_t^{ag}) = F(\lambda x_t^{ag} + (1-\lambda)x_t ) - F(x_t^{ag}).\]
We notice that $g(1) = 0$ and 
\begin{align*}
g'(\lambda) & = \langle \nabla F(x_t^{md}), x_t^{ag} -x_t \rangle \\
\langle \nabla F(x_t^{md}),x_t^{md} - x_{t}  \rangle & = \langle \nabla F(x_t^{md}), \lambda (x_t^{ag} - x_{t})  \rangle = \lambda g'(\lambda).
\end{align*}
As a first step in our proof, we show that there indeed exists a $\lambda$ satisfying \eqref{cond}, i.e., fulfilling
\[ C_t g(\lambda)  + \lambda g'(\lambda )\leq \epsilon_t. \]
If $\lambda = 0$ and $\lambda = 1$ fail to satisfy the inequality, we know that both :
\begin{align*}
g'(1) & > \epsilon_t  \\
C_t g(0)  & > \epsilon_t 
\end{align*}
From now, we will assume $g'(1),g(0) > 0$ for the rest of the proof. We set an index set $\Lambda_\star = \{ \lambda \in [0,1] \mid g(\lambda) \leq 0, g'(\lambda)=0 \}$. We notice that
\[\lambda \in \Lambda_\star \implies C_t g(\lambda)  + \lambda g'(\lambda )\leq 0.\]
We show that $\Lambda_\star$ is not an empty set. For that we will apply Lemma~\ref{lemma-binary-search-2} with $g(0),g'(1) > 0 = g(1)$, there exists $\lambda_0 \in [0,1], g(\lambda_0) \leq 0, g'(\lambda_0) = 0 $ and $\lambda_0 \in \Lambda_\star$.

\bigskip\noindent
Now for any $\lambda_\star \in \Lambda_\star$, since $g'$ is $\alpha$-Holder-smooth according to Lemma~\ref{lemma-binary-search}, for $\delta > 0$,
\begin{align*}
\forall \lambda \in [\lambda_\star-\delta,\lambda_\star]: \quad | g'(\lambda) - g'(\lambda_\star) | \leq \frac{2L}{\kappa} \|x_t - x_t^ {ag} \|^\kappa \delta^\kappa \\
\forall \lambda \in [\lambda_\star-\delta,\lambda_\star]: \quad g(\lambda) - g(\lambda_\star) \leq \int_{\lambda_\star-\delta}^{\lambda_\star}  |g'(t)| dt   \leq \frac{2L}{\kappa} \|x_t - x_t^ {ag} \|^\kappa \delta^{\kappa+1}
\end{align*}
Therefore, for all $ \lambda \in [\lambda_\star-\delta,\lambda_\star]$,
\begin{align*}
  &C_t g(\lambda)  + \lambda g'(\lambda )
  \\
  &~=~
  \underbrace{
    C_tg(\lambda_\star)  + \lambda_\star g'(\lambda_\star )
    }_{\le 0}
    + C_t(g(\lambda) - g(\lambda_\star))
    + \lambda g'(\lambda )
    - \lambda_\star \underbrace{g'(\lambda_\star )}_{=0}
    \\
    &~\le~
    C_t \frac{2L}{\kappa} \|x_t - x_t^ {ag} \|^\kappa \delta^{\kappa+1}
    + \lambda \frac{2L}{\kappa} \|x_t - x_t^ {ag} \|^\kappa \delta^\kappa
    \\
    &~\le~
    \frac{2L}{\kappa} \|x_t - x_t^ {ag} \|^\kappa \delta^\kappa (
    C_t\delta
    + 1)
    .
\end{align*}
In order to make the last expression smaller than $\epsilon_t$, we consider $\delta = \min \{\frac{1}{C_t},\Big( \frac{\kappa \epsilon_t }{4 L\|x_t - x_t^ {ag} \|^\kappa }\Big)^{1/\kappa} \}$.
The last step is to notice that in Algorithm~\ref{algo-binary-search}, our stop condition is satisfied when 
\[\Lambda_\star \cap [a,b] \neq \emptyset , \delta > b-a,\]
since that implies $\exists \lambda_\star \in \Lambda_\star, a \in [\lambda_\star - \delta, \lambda_\star] \implies C_tg(a) + a g(a) \leq \epsilon_t$.

Now we verify that the condition $\Lambda_\star \in [a,b]$ holds for all the iterates. For that, using the previous Lemma~\ref{lemma-binary-search-2}, we only need to verify the conditions at the extremities of the segment.
If we initially have :
\[g(a), g'(b) > 0 \geq g(b)\]
then while iterating with $\frac{a+b}{2} $, we know that :
\begin{align*}
g \Big( \frac{a+b}{2} \Big) > 0  \implies g \Big( \frac{a+b}{2} \Big), g'(b) > 0 \geq g(b) & \implies \Lambda_\star \cap \Big[\frac{a+b}{2} , b\Big]  \neq \emptyset \\
g'\Big( \frac{a+b}{2} \Big) > 0 \geq g \Big( \frac{a+b}{2} \Big)   \implies g(a), g'\Big( \frac{a+b}{2} \Big) > 0 \geq g\Big( \frac{a+b}{2} \Big) & \implies \Lambda_\star \cap \Big[a, \frac{a+b}{2}\Big]  \neq \emptyset \\
0 \geq g \Big( \frac{a+b}{2} \Big), g'\Big( \frac{a+b}{2} \Big)    & \implies \frac{a+b}{2}  \in \Lambda_\star
\end{align*}
Once we prove that our iteration is valid, we know the interval starts as $[0,1]$ and we halve the size in each iteration until it reaches $\delta$. So the maximum iteration count is  :
\begin{align*}
\log(1/\delta)+1 = \max\left\{ \log(C_t),  \log \left( \frac{4 L\|x_t - x_t^ {ag} \|^\kappa }{\kappa \epsilon_t }\right)^{1/\kappa} \right\} + 1 
\end{align*}
\end{proof}

\section{Lower bounds for binary search}
We recall from Section~\ref{sec4} the sub-problem~\eqref{bin_cond} of finding $\lambda \in [0,1]$ such that
\begin{equation}\label{eq:g.eps.opt}
  C g(\lambda)  + \lambda g'(\lambda )\leq \epsilon
\end{equation}
with $C,\epsilon>0$ two constants and $g(1)=0$.
In this section, we show a lower bound on the number of iterations required to solve that problem in a first-order query model. We denote the class of weakly-smooth functions by
\[ \mathcal{F}_{L_\star,\kappa} := \{ g \in \mathcal{C}^1([0,1], \mathbb{R}) \mid   g(1)=0, \quad \forall a,b \in [0,1], \quad  | g'(a) - g'(b)| \leq L_\star|a-b|^{\kappa-1} \}, \]
and we denote the class of smooth functions by $\mathcal{F}_{L_\star} :=\mathcal{F}_{L_\star,2}$. Since the domain is $[0,1]$, we know that $\mathcal{F}_{L_\star} \subset \mathcal{F}_{L_\star,\kappa}$ for all $1<\kappa \leq 2$.
The lower bound consists of building two smooth functions that agree with the observed function values and derivatives $(g(\lambda_t),g'(\lambda_t))$ for every $\lambda_t$ evaluated, but these two functions have distinct intervals where the condition~\eqref{eq:g.eps.opt} is satisfied on the line. For simplicity, we refer to the condition ~\eqref{eq:g.eps.opt} as the linear condition.

The following theorem gives the key condition on the interval $[a,b]$ to build counter examples. More precisely, we build $g$ in the interval $[a,b]$ and we need to satisfy the interpolation condition $(g(a),g'(a))=(A,0)$ and $(g(b),g'(b))=(B,B')$. $C$ is the constant given in the problem \eqref{eq:g.eps.opt} and $C_\star$ is related to $C$ where we need $[a,b] \subset [1/C_\star,1]$ so that our lower bound will work. For the upcoming theorem, we will consider the following notation   
\begin{align}
  \notag
  C_\star & = 1 + \frac{1}{C}\\
  \label{eq:PHI}
\Phi(a,b,A,B,B') & =   \frac{56 B'}{L_\star(b-a)} + \frac{32(A-B)}{L_\star(b-a)^2 }
.
\end{align}

Now, we can give the main theorem for lower bound.

\begin{theorem} \label{lower-bound-binary}
If the following conditions are satisfied,
\begin{equation}
\begin{aligned}
\begin{cases}
[a,b]  \subset [ 1/C_\star,1] \\
\Phi(a,b,A,B,B') \leq  1     \\
A \geq \frac{\epsilon}{C}, \quad \quad C B+ \frac{B'}{C_\star} \geq \epsilon  \\
B' \geq 0 \geq B \\
\end{cases}
\end{aligned}
\end{equation}

then there exists two functions $g_1, g_2 \in \mathcal{F}_{L_\star} $ such that : 
\begin{align*}
g_1(a) = g_2(a) & = A\\
g_1(b) = g_2(b) & = B\\
g_{1}'(a) = g_{2}'(a) & = 0\\
g_{1}'(b) = g_{2}'(b) & = B'\\
\forall \lambda \in \Big[a, \frac{a+b}{2}\Big], \quad & Cg_1(\lambda) + \lambda g_1'(\lambda) \geq \epsilon \\
\forall \lambda \in \Big[\frac{a+b}{2},b \Big], \quad & Cg_2(\lambda) + \lambda g_2'(\lambda) \geq \epsilon
\end{align*}
\end{theorem}

We also notice that if the conditions are satisfied with $2\epsilon$ instead of $\epsilon$, we can have a strict guarantee on the last lines on $\epsilon$, therefore the strictness of inequality is not important here. Theorem~\ref{lower-bound-binary} claims that if the condition evaluation parameter $\Phi$ is smaller than $1$, it is impossible to determine the location of the correct $\lambda_t$ since there are still two functions in $\mathcal{F}_{L_\star}$ providing distinct intervals of solutions $\lambda_t$ (by the continuity of $g'$, a solution $\lambda_t$ has to exist over the whole interval $[a,b]$ if the interpolation conditions on $(a,b)$ are satisfied.) We provide the proof in the next section.

\subsection{Proof with two separate constructions}
In this section, we will show how to build the two smooth functions $(g_1,g_2)$ in two different ways in theory. (In practice, one only need to build them at the end of all requests to show the lower bounds.)  Both functions will interpolate (i.e.\ agree with) given first-order boundary conditions, but the first function $g_1$ has the property to not have any point satisfying \eqref{eq:g.eps.opt} on the left side $\Big[a,\frac{a+b}{2}\Big]$ whereas the second function $g_2$ does not satisfy the condition \eqref{eq:g.eps.opt} on the right side $\Big[\frac{a+b}{2},b\Big]$. We use two separate constructions because \eqref{eq:g.eps.opt} is not symmetric. The rough idea is that if the size of the interval $[a,b]$ is big enough, then the construction is possible while respecting the $L_\star$-smoothness condition.
\begin{lemma} \label{lemma-construction-1}
We consider $[a,b] \subseteq [0,1] $. If the following conditions are satisfied :
\begin{equation}
\begin{aligned}
\begin{cases}
b-a & \geq \frac{6 B'}{L_\star} +  \frac{16(A - B)}{ L_\star (b-a)}     \\
A & \geq \frac{\epsilon}{C} \\
B' & \geq 0 
\end{cases}
\end{aligned}
\end{equation}
then there exists an interpolating $L_\star$-smooth function $g_1$ such that :
\begin{align*}
g_{1}'(a) & = 0, \quad g_{1}(a) = A,  \\
g_{1}'(b) & = B', \quad g_1(b) = B, \quad   \\
\forall  x \in & \Big[a, a + \frac{b-a}{2} \Big], \quad  C g_1(\lambda)  + \lambda g_1'(\lambda)  \geq \epsilon 
\end{align*}
\end{lemma} 

\begin{proof}
We will make a three-piece piece-wise linear interpolation of the derivative $g'$:
\begin{align*}
g_{1}'\Big(a + \frac{b-a}{2} \Big) & = 0 \\
g_{1}'\Big(a + \frac{3(b-a)}{4} \Big) & =  - W
\end{align*}
with $W > 0$ that we determine later. Now we will verify the linear condition \eqref{eq:g.eps.opt} on different segments of $[a,b]$.
For $\lambda \in \Big[a, a + \frac{b-a}{2} \Big] $,  we have $g_{1}'(\lambda) = 0, \quad g_{1}(\lambda) = A $. Therefore, in this interval, we can always assure that \[C g_{1}(\lambda) + \lambda g_{1}'(\lambda) \geq CA \geq \epsilon. \]
For $\lambda \in \Big[a + \frac{b-a}{2}, a + \frac{3(b-a)}{4} \Big] $,  we have the linear interpolation 
\[ g_{1}'(\lambda) = - \frac{4 W}{b-a} \times \Big (\lambda - a - \frac{b-a}{2} \Big), \]
For $\lambda \in \Big[a + \frac{3(b-a)}{4}, b \Big] $,  we have the linear interpolation
\[ g_{1}'(\lambda) = -W  + \frac{4(B'+W)}{b-a} \times \Big (\lambda - a - \frac{3(b-a)}{4} \Big), \]
In the end we want to make sure that the integral of $g'$ over $[a,b]$ is $B-A$ :
\[ g_{1}\Big( a + \frac{b-a}{2} \Big)  + \int_{a + \frac{b-a}{2}}^{b} g_{1}'(\lambda) d \lambda = B.  \]
We can develop the previous form, since $g'$ is a linear interpolation, the integral is just the average of its two extreme points  :
\begin{align*}
 A - \frac{W(b-a)}{8} + \frac{b-a}{4} \times \frac{-W+B'}{2} & = B \\
\Leftrightarrow   -W -W + B' & = \frac{8(B - A)}{b-a}   \\
\Leftrightarrow   W & =  \frac{B'}{2} + \frac{4(A - B)}{b-a} 
\end{align*}
Now that we find the value for $W$, we need to check the smoothness on the two last intervals.
\begin{align*}
\begin{cases}
W & \leq L_\star \times  \frac{b-a}{4} \\
B' + W & \leq L_\star \times  \frac{b-a}{4}
\end{cases}
\end{align*}
Since we supposed that $B' \geq 0$, the second condition is stricter. We rewrite the second condition as : 
\begin{align*}
 \frac{B'}{2} + \frac{4(A - B)}{b-a}  +   B'  & \leq L_\star \times  \frac{b-a}{4}  \\
\Leftrightarrow    \frac{6 B'}{L_\star} +  \frac{16(A - B)}{ L_\star (b-a)}   & \leq (b-a)   \\
\end{align*}
which is what we set out to show.
\end{proof}

Now we will build the second interpolation function on the right side of the interval $\Big[ \frac{a+b}{2},b\Big]$. Unlike $g_1$, constant derivative is not working here, we show in the following lemma that linear derivative works if $[a,b]$ is big enough, but contained in $[ 1/C_\star,1]$.
\begin{lemma} \label{lemma-construction-2}
Let's consider $C_\star =1 + \frac{1}{C}$. We consider $[a,b] \subset [ 1/C_\star,1]$. If the following conditions are satisfied :

\begin{equation}
\begin{aligned}
\begin{cases}
b-a & \geq \frac{56 B'}{L_\star} + \frac{32(A-B)}{L_\star(b-a)}  \\
B' & \geq 0 \geq B \\
CB+ \frac{B'}{C_\star} & \geq \epsilon ,
\end{cases}
\end{aligned}
\end{equation}
then there exists an $L_\star$-smooth function $g_2$ such that :
\begin{align*}
g_{2}'(a) & = 0, \quad g_{2}(a) = A\\
g_{2}'(b) & = B', \quad g_1(b) = B \\
\forall  \lambda \in & \Big[a + \frac{b-a}{2}, b  \Big], \quad  Cg_2(\lambda)  + \lambda g_2'(\lambda)  \geq \epsilon 
\end{align*}
\end{lemma}

\begin{proof}
Now we start making linear interpolation for the derivatives with the middle point values fixed in the following way
\begin{align*}
g_{2}' \Big(a + \frac{b-a}{4} \Big) & = - W \\
g_{2}' \Big(a + \frac{b-a}{2} \Big) & =  KB'
\end{align*}

with $V,W > 0$ two constants to be determine later. Now we will start with the right side as it is the most interesting one.

For $\lambda \in \Big[a + \frac{(b-a)}{2},b \Big] $,  we apply the linear interpolation :
\begin{align*}
g_{2}'(\lambda) & =  B' + \frac{2(KB' -B')}{b-a} \times \Big ( b - \lambda \Big) \\
& =  B' \left( 1  + \frac{2(K-1)}{b-a} \times \Big ( b - \lambda \Big)   \right) 
\end{align*}

Since the interpolation of the derivative is linear, we know that : 
\begin{align*}
g_2(b) - g_{2}(\lambda) & = \int_{\lambda}^{b} g'(s) ds \\
& = \Big( \frac{g_{2}'(b) + g_{2}'(\lambda)}{2}\Big) (b-\lambda) \\
g_2(\lambda) & = B - B' \left(1  + \frac{(K-1)(b - \lambda)}{b-a}    \right) (b-\lambda)
\end{align*}

Since $a\geq 1/3$ and $g_2' \geq 0$ on this interval, we know that:
\begin{align*}
& Cg_2(\lambda) + x g_{2}'(\lambda) \\
\geq & Cg_2(\lambda) + \frac{ g_{2}'(\lambda) }{C_\star}\\
\geq & CB - CB' \left(1  + \frac{(K-1)(b - \lambda)}{b-a}    \right) (b-\lambda)  + \frac{B'}{C_\star}  \left( 1 +  \frac{2(K-1)}{(b-a)} \times ( b - \lambda) \right)   \\
\geq & \Big( CB +  \frac{B'}{C_\star}\Big)  +  \left(   \frac{2(K-1)}{C_\star(b-a)}  -  C \Big(1  + \frac{(K-1)(b - \lambda)}{b-a}    \Big)    \right) \times  B' ( b - \lambda)  
\end{align*}

We only need to choose $K$ big enough so $\frac{2(K-1)}{C_\star(b-a)}  -  C \Big(1  + \frac{(K-1)(b - \lambda)}{b-a}    \Big)   $ is always positive, we notice that :
\begin{align*}
\frac{b-\lambda}{b-a} & \leq \frac{1}{2} \\
b-a & \leq 1 - \frac{1}{C_\star} = \frac{C_\star - 1}{C_\star} \\
\implies \frac{1}{C_{\star}(b-a)} & \geq \frac{1}{C_\star} \frac{C_\star }{C_\star - 1} = \frac{1}{C_\star - 1}
\end{align*}
then 
\begin{align*}
& \frac{2(K-1)}{C_\star (b-a)}  -  C \Big(1  + \frac{(K-1)(b - \lambda)}{b-a}    \Big) \\
\geq &\frac{2(K-1)}{C_\star - 1} - C\Big(1  + \frac{K-1}{2}    \Big) \\
= & \frac{2(K-1)}{C_\star - 1} -  \frac{C(K+1)}{2}  \\
= & \frac{4}{C_\star - 1} - 2C \geq 0
\end{align*}

by picking $K=3$ in the last line and as we recall that $C_\star = 1 + \frac{1}{C}$.

For $x \in \Big[a, a + \frac{(b-a)}{2}\Big] $,  we need to assure the integral of $g'$ over $[a,b]$
\begin{align*}
\int_{a}^{a + \frac{b-a}{2}} g_{2}'(\lambda) d\lambda  =  g_2\Big(\frac{a+b}{2}\Big) - g_2(a) & = B - B' \left(1  + \frac{K-1}{2}    \right) \frac{b-a}{2} - A  \\
& =B - B'(b-a) - A 
\end{align*}

We are making an linear interpolation of $g_{2}'$ and 
\[ g_{2}'(a)= 0 , \quad g_{2}'\Big(a + \frac{b-a}{4} \Big) = - W, \quad g_{2}'\Big(a + \frac{b-a}{2} \Big) = 3B',\]

then,
\begin{align*}
\frac{0 + 3B' - 2W }{4} \times \frac{b-a}{2} & = B-A - B'(b-a)  \\
\Leftrightarrow 3B' - 2W + 8 B'  & = \frac{8(B-A)}{b-a} \\
W     & = 11B' +  \frac{8(A-B)}{b-a} 
\end{align*}

The remaining task is to check the $L_\star$-smoothness over the three segments :
\begin{align*}
\begin{cases}
W & \leq L_\star \times  \frac{b-a}{4} \\
W + 3 B' & \leq L_\star \times  \frac{b-a}{4} \\
2B' & \leq L_\star \times  \frac{b-a}{2}
\end{cases}
\end{align*}

The second one is the most restrictive one. It is equiavalent to :
\begin{align*}
11B' +  \frac{8(A-B)}{b-a}  + 3B' & \leq L_\star \times  \frac{b-a}{4}  \\
\Leftrightarrow  \frac{56 B'}{L_\star} + \frac{32(A-B)}{L_\star(b-a)}  & \leq b-a  
\end{align*}

\end{proof}

\subsection{Induced Lower bound theory}

Now combining Lemma~\ref{lemma-construction-1} with Lemma~\ref{lemma-construction-2}, we will design a lower bound adversary $\mathcal{A}$ that limits the growth of $\Phi$ from \eqref{eq:PHI} at each evaluation.

If $\lambda \in \Big[ a, a + \frac{b-a}{2} \Big] $, $\mathcal{A}$ returns :
\[ g'(\lambda) = \epsilon, \quad g(\lambda) = A \]

If $\lambda \in \Big[  a + \frac{b-a}{2}, b \Big] $, $\mathcal{A}$ returns :
\[ g'(\lambda) = B' \left( 1  + \frac{4(b-\lambda)}{b-a}   \right) , \quad g(\lambda) = B - B' \left(1  + \frac{2(b - \lambda)}{b-a}    \right) (b-\lambda) \]

\begin{theorem} 
We consider the same condition as in Theorem~\ref{lower-bound-binary}. After requesting $(g,g')$ at $\lambda$, the algorithm $\mathcal{A}$ returns $(g(\lambda),g'(\lambda))$. We know that value of $\Phi$ on  $[a', b'] = [a,\lambda] $ or  $[a', b'] = [\lambda,b] $ will verify that :
\begin{align*}
\begin{cases}
\Phi\Big(a',b',g(a'),g(b'),g'(b') \Big)  \leq 5  \Phi(a,b,A,B,B')    \\
g(a')\geq \frac{\epsilon}{C},  \quad g' (b') \geq 0 \geq g(b')\\
C g(b') +  \frac{g'(b')}{C_\star} \geq \epsilon , \\
\end{cases}
\end{align*}
\end{theorem}

\begin{proof}
If $\lambda \in \Big[ a, a + \frac{b-a}{2} \Big] $, the new segment is on the right side of $\lambda$, $[a', b'] = [\lambda,b] $. Since $b-a \leq 2(b-\lambda)$, we know that
\begin{align*}
\Phi\Big(a',b',g(a'),g(b'),g'(b') \Big) & = \frac{56 g'(b')}{L_\star(b-\lambda)} + \frac{32(g(\lambda)-g(b))}{L_\star(b-\lambda)^2 }  \\
& = \frac{56 B'}{L_\star(b-\lambda)} + \frac{32(A-B)}{L_\star(b-\lambda)^2 } \\
& \leq 2\times  \frac{56 B'}{L_\star(b-a)} + 4\times  \frac{32(A-B)}{L_\star(b-a)^2 } \leq 4 \Phi(a,b,A,B,B')
\end{align*}
If $\lambda \in \Big[  a + \frac{b-a}{2}, b \Big] $, the new segment is on the left side of $\lambda$, $[a',b'] = [a,\lambda]$. We recall that 
\[ g'(\lambda) = B' \left( 1  + \frac{4(b-\lambda)}{b-a}   \right) , \quad g(\lambda) = B - B' \left(1  + \frac{2(b - \lambda)}{b-a}    \right) (b-\lambda) \]
then
\begin{align*}
& \Phi\Big(a',b',g(a'),g(b'),g'(b') \Big) \\
= & \frac{56 g'(b')}{L_\star(\lambda-a)} + \frac{32(g(\lambda)-g(b))}{L_\star(\lambda-a)^2 }  \\
= & \frac{56 g'(\lambda)}{L_\star(\lambda-a)} + \frac{32(A-g(\lambda))}{L_\star(\lambda-a)^2 } \\
= & \frac{56 B' }{L_\star(\lambda-a)}  + \frac{56B'}{L_\star(\lambda-a)} \times \frac{4(b-\lambda)}{b-a}  + \frac{32(A-B)}{L_\star(\lambda-a)^2 } + \frac{32 B'}{L_\star(\lambda-a)^2 } \times \frac{2(b-\lambda)^2}{b-a}\\
= & \frac{56 B' }{L_\star(\lambda-a)}  + \frac{56B'}{L_\star(\lambda-a)} \times \frac{4(b-\lambda)}{b-a}   + \frac{32 B'}{L_\star(b-a) } \times \frac{2(b-\lambda)^2}{(\lambda-a)^2} + \frac{32(A-B)}{L_\star(\lambda-a)^2 }\\
\leq & \frac{56 B' }{L_\star(\lambda-a)}  + \frac{56B'}{L_\star(\lambda-a)} \times 2  +  \frac{32 B'}{L_\star(b-a) } \times \frac{1}{2} + \frac{32(A-B)}{L_\star(\lambda-a)^2 }
\end{align*}
with the last inequality using that $2(b-\lambda)\leq b-a$, then 
\begin{align*}
& \Phi\Big(a',b',g(a'),g(b'),g'(b') \Big) \\
\leq & \frac{5}{2} \times \frac{56 B' }{L_\star(\lambda-a)}  + \frac{32(A-B)}{L_\star(\lambda-a)^2 } \\
\leq & 5 \times \frac{56 B' }{L_\star(b-a)}  + 4 \times \frac{32(A-B)}{L_\star(b-a)^2 } = 5 \Phi(a,b,A,B,B')
\end{align*}
Combining both, we also guarantee that 
\[ \Phi( a',b',g(a'),g(b'),g'(b')) \leq 5 \Phi(a,b,A,B,B') \]
\end{proof}

In the previous theorem, we showed that the condition evaluation function $\Phi$ grows at an exponential speed at most through each evaluation and we know that $\Phi \leq 1$ means that we can always find two smooth functions which have distinct intervals that satisfy \eqref{eq:g.eps.opt}. Combining both, the next theorem gives an explicit formulation for the logarithmic complexity

\begin{theorem}
\label{induced-lower-bound-binary}
Let's fix parameters $C,\epsilon, L_\star > 0$. Consider a sequence of $N$ points $(\lambda_1,\ldots,\lambda_N)$ where $(g(\lambda_i),g'(\lambda_i))$ are evaluated. If the number of iterations is insufficient
\[ N < \log(5) \Big( \log \frac{L_\star}{\epsilon} + \log \frac{C}{(C+1)^2} - \log(88) \Big)  \]
Then there exists two $L_\star$-smooth functions $(g_1,g_2)$ and two sets $I_1, I_2$ such that
\begin{align*}
\forall  1\leq i \leq N, \quad g_1(\lambda_i) = g_2(\lambda_i) = g(\lambda_i) \\
\forall  1\leq i \leq N, \quad g_{1}'(\lambda_i) = g_{2}'(\lambda_i) = g'(\lambda_i) \\
\forall \lambda \in I_1, \quad  Cg_1(\lambda) + \lambda g_{1}'(\lambda) \geq \epsilon \\
\forall \lambda \in I_2, \quad  Cg_2(\lambda) + \lambda g_{2}'(\lambda) \geq \epsilon \\
I_1 \cup I_2 = [0,1]
\end{align*}
\end{theorem}

\begin{proof}
The lower depends on the initial value of $\Phi$. We can consider $[a,b]=[\frac{1}{C_\star},1]$ with $A=\frac{\epsilon}{C}$, $B = 0$ and $B'=C_\star \epsilon$.  We recall that $C_\star = \frac{C+1}{C}$ ($1-\frac{1}{C_\star} = \frac{1}{C+1}$) and we notice that
\begin{align*}
\Phi\left(\frac{1}{C_\star},1,\frac{\epsilon}{C},0,C_\star \epsilon\right) & = \frac{56 C_\star \epsilon (C+1) }{L_\star}  + \frac{32 \epsilon (C+1)^2}{C  L_\star } \\
& = \frac{56 \epsilon (C+1)^2 }{CL_\star}  + \frac{32 \epsilon (C+1)^2}{C  L_\star } = \frac{88 \epsilon (C+1)^2 }{CL_\star}
\end{align*}

Then the minimum iteration is 
\[ \log_5 \Big( \frac{CL_\star}{88 \epsilon (C+1)^2 }\Big) = \log(5) \Big( \log \frac{L_\star}{\epsilon} + \log \frac{C}{(C+1)^2} \Big) + \mathcal{O}(1)\]
\end{proof}

\section{Lower bounds in p-norm}

For the lower bound, we use the results from to~\cite{Diakonikolas2024} where the authors consider convex functions that are weakly smooth in a non-Euclidean norm. 

\begin{theorem}[\cite{Diakonikolas2024}]
Let $1 \leq p \leq \infty$, and consider the problem class of unconstrained minimization with objectives in the class $\mathcal{F}_{\mathbb{R}^d,\|\cdot\|_p}(\kappa, L)$, whose minima are attained in $\mathcal{B}_{\|\cdot\|_p}(0, R)$. Then, the complexity of achieving additive optimality gap $\epsilon$, for any local oracle, is bounded below by:
\[
\left\{
\begin{array}{lcl}
\Omega\left(\left(\frac{L R^\kappa}{\epsilon[\ln d]^{\kappa-1}}\right)^{\frac{2}{3 \kappa-2}}\right), & & \mbox{ if } 1 \leq p<2 ; \\
\Omega\left(\left(\frac{L R^\kappa}{\epsilon \min \{p, \ln d\}^{\kappa-1}}\right)^{\frac{p}{\kappa p+\kappa-p}}\right), & & \mbox{ if }  2 \leq p<\infty ; \text { and, } 
\end{array}
\right.
\]
The dimension $d$ for the lower bound to hold must be at least as large as the lower bound itself.
\end{theorem}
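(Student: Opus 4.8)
The plan is to use the classical worst-case-instance recipe for first-order complexity lower bounds --- the chain / zero-chain construction of Nesterov and Nemirovski--Yudin, in the $\ell_p$-refined form of Guzm\'an--Nemirovski --- and then to calibrate it so that the hard function lands exactly in the weakly smooth class $\mathcal{F}_{\mathbb{R}^d,\|\cdot\|_p}(\kappa,L)$ with minimizer inside $\mathcal{B}_{\|\cdot\|_p}(0,R)$. Because this lower bound is stated for the convex class, it suffices to exhibit a single convex, $(L,\kappa)$-weakly-smooth-in-$\|\cdot\|_p$ objective --- or a family closed under the oracle's adversarial answers --- on which every algorithm with a local first-order oracle is slow; the star-convex entries of Table~\ref{our:results} then follow for free since convex functions are $1$-star-convex.

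First I would fix a chain length $k$ (to be optimized, with $k\le d$) and build a \emph{smoothed chain} objective $f(x) = \gamma\,\eta_r\big(\max_{1\le i\le k}(b_i - (Bx)_i)\big)$, where $B$ is a bidiagonal path-graph incidence matrix so that $(Bx)_i$ depends only on $x_i$ and $x_{i+1}$, $\eta_r$ is a $C^1$ smoothing of $\max$ with tunable modulus, and $\gamma,r$ are scalars. The two structural facts to check are convexity (immediate) and the \emph{zero-chain property}: if $x$ is supported on $\{e_1,\dots,e_j\}$ then $\nabla f(x)$ is supported on $\{e_1,\dots,e_{j+1}\}$, which follows from the band structure of $B$ together with the fact that near the start of the chain only the first few terms of the max are active. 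Taking $x_0=0$ without loss of generality, this forces every first-order method to keep its iterates in $\mathrm{span}(e_1,\dots,e_T)$ after $T$ oracle calls, so its best reportable value is at least $\min\{f(x): x\in\mathrm{span}(e_1,\dots,e_T)\}$, which --- because the last $k-T$ links are still ``unexcited'' --- exceeds $\min_x f(x)$ by an explicit residual $\Delta(T,k)$ that decays only polynomially in $T$ with an exponent governed by $\kappa$. To make this robust against an adaptive oracle I would either randomize $B$ by a uniform coordinate permutation and argue in expectation, or invoke the standard resisting-oracle argument keeping all answers consistent with some member of the family; this also explains why the bound is vacuous once $T\gtrsim k$, hence the requirement that $d$ be at least as large as the claimed bound.

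The core of the proof is the calibration, which is where the $\ell_p$-specific constants appear. I would compute the $(L,\kappa)$ weak-smoothness constant of $f$ \emph{with respect to $\|\cdot\|_p$}: since $f$ depends on $x$ only through the $(Bx)_i$, this reduces to bounding the operator norm of $B$ from $\|\cdot\|_p$ into the norm dual to the one in which $\eta_r$ is smooth, together with estimating $\|x^\star\|_p$ for the radius constraint. For a length-$k$ chain the minimizer has $\Theta(k)$ coordinates of comparable magnitude, so $\|x^\star\|_p\asymp k^{1/p}$ times the common scale; meanwhile a $C^1$ approximation of the max of $k$ terms necessarily incurs an additive error $\asymp r\ln k$, equivalently its smoothness constant blows up like $\ln k$, and for $p\ge2$ one replaces this by $\min\{p,\ln d\}$ using the elementary fact that $\|\cdot\|_p$ and $\|\cdot\|_{\min\{p,\ln k\}}$ are equivalent on $\mathbb{R}^k$ up to a universal constant. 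Eliminating $\gamma$, $r$ and $k$ from the three constraints --- prescribed $(L,\kappa)$, minimizer in the $R$-ball, and $\Delta(T,k)\le\epsilon$ --- and optimizing over $k$ and over the auxiliary exponent hidden in $\eta_r$ yields exactly $T=\Omega\big((LR^\kappa/(\epsilon[\ln d]^{\kappa-1}))^{2/(3\kappa-2)}\big)$ for $1\le p<2$ and $T=\Omega\big((LR^\kappa/(\epsilon\min\{p,\ln d\}^{\kappa-1}))^{p/(\kappa p+\kappa-p)}\big)$ for $2\le p<\infty$.

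The main obstacle is exactly this final balancing act: one must pick $\eta_r$ (equivalently the auxiliary exponent and a regularization level) so that \emph{simultaneously} (i) $f$ has precisely the declared weak-smoothness exponent $\kappa$ in $\|\cdot\|_p$ --- not a weaker one --- (ii) $\|x^\star\|_p\le R$, and (iii) the per-iteration progress $\Delta(T,k)$ is as large as possible, and then track correctly how the $\ell_p$-to-$\ell_s$ operator norm of the chain matrix combines with the $k^{1/p}$ (or $\ln k$) factors. A sloppy choice of any of these produces a strictly weaker exponent, so reproducing the precise rates $\frac{2}{3\kappa-2}$ and $\frac{p}{\kappa p+\kappa-p}$ together with the logarithmic corrections is the delicate part; the convexity, the zero-chain property and the span counting are routine by comparison.
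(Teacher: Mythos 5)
The paper does not actually prove this theorem: it is imported verbatim from \cite{Diakonikolas2024}, which in turn rests on the construction of \cite{GUZMAN20151}, so there is no in-paper proof to compare your attempt against. Your outline is essentially a reconstruction of that external argument: the smoothed max-of-affine ``chain'' hard instance, a locality/resisting-oracle step confining the iterates to a low-dimensional span, and a final calibration of the smoothing level, chain length and scale against the three constraints (weak smoothness $(L,\kappa)$ with respect to $\|\cdot\|_p$, minimizer in the $R$-ball, residual optimality gap at least $\epsilon$). This is the right strategy, and you correctly identify the provenance of the $[\ln d]^{\kappa-1}$ and $\min\{p,\ln d\}^{\kappa-1}$ factors (smoothing a max of $k$ terms, and the degradation of the smoothing prox in $\ell_p$ for large $p$) as well as the reason convexity suffices for the star-convex table entries.

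Two caveats. First, the zero-chain property as you literally state it fails for the natural global smoothings of the max (a log-sum-exp type regularization makes every coordinate of the gradient nonzero at once); the argument that actually closes is the resisting-oracle one, in which the oracle's answers stay consistent with a large family of chain functions, and although you mention this alternative you should commit to it rather than to the support-propagation claim. Second, and more substantively, everything that distinguishes this theorem from a generic ``chain functions are hard'' statement lives in the calibration you defer to your final paragraph: the exponents $\frac{2}{3\kappa-2}$ and $\frac{p}{\kappa p+\kappa-p}$ only emerge after explicitly solving the system relating $\gamma$, the smoothing modulus, the chain length $k$ and the number of iterations $T$, and verifying that the constructed function has weak-smoothness exponent exactly $\kappa$ in $\|\cdot\|_p$ rather than a weaker one. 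Until that computation is carried out, the proposal is a correct roadmap rather than a proof; within the context of this paper, the correct move is simply to cite the result, as the authors do.
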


We would like to point out that the previous lower bounds use convex functions whereas the upper bounds provided in our work focus on the star-convex function class. Therefore, it is surprising that our algorithms performs nearly optimally on $p$-norms up to a factor depending only on $\tau$.

\section{Bregman divergence and radius of domain}\label{bregman vs radius}
In this section we relate the Bregman divergence generated by the $q$-th power of the $p$-norm to the $q$-th power of the $p$-norms of the arguments.

\begin{lemma}
  Fix $p>1$, $q > 1$ and define $\psi(x) := \frac{1}{q} \norm{x}_p^q$. Then
  \[
    D_\psi(x,y)
    ~\le~
    2 \max \big\{
    \norm{x}_p^q,
    \norm{y}_p^q
    \big\}
    .
  \]
\end{lemma}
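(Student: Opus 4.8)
The plan is to bound the Bregman divergence $D^\psi(x,y) = \psi(x) - \psi(y) - \langle \nabla \psi(y), x - y\rangle$ directly from its definition, using only the convexity of $\psi$ (so $D^\psi \ge 0$) together with the explicit form of $\nabla \psi$ for $\psi(x) = \frac1q \norm{x}_p^q$. First I would record that $\nabla \psi(y) = \norm{y}_p^{q-p}\, (|y_i|^{p-2} y_i)_i$ for $y \ne 0$ (and $\nabla\psi(0)=0$ when $q>1$), which is exactly the computation already performed in the proof of Lemma~\ref{proof-p-norm} scaled appropriately; in particular $\norm{\nabla \psi(y)}_{p_\ast} = \norm{y}_p^{q-1}$ and $\langle \nabla\psi(y), y\rangle = \norm{y}_p^q$.

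The main step is then a short chain of inequalities. Write
\[
  D^\psi(x,y)
  ~=~
  \frac1q\norm{x}_p^q - \frac1q \norm{y}_p^q - \langle \nabla\psi(y), x\rangle + \langle \nabla\psi(y), y\rangle
  ~=~
  \frac1q\norm{x}_p^q + \Big(1 - \frac1q\Big)\norm{y}_p^q - \langle \nabla\psi(y), x\rangle,
\]
using $\langle \nabla\psi(y),y\rangle = \norm{y}_p^q$. Now bound the cross term by Hölder's inequality (for the pair $p, p_\ast$): $-\langle \nabla\psi(y),x\rangle \le \norm{\nabla\psi(y)}_{p_\ast}\norm{x}_p = \norm{y}_p^{q-1}\norm{x}_p$. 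This gives
\[
  D^\psi(x,y) ~\le~ \frac1q\norm{x}_p^q + \Big(1-\frac1q\Big)\norm{y}_p^q + \norm{y}_p^{q-1}\norm{x}_p.
\]
Each of the three terms on the right is at most $\max\{\norm{x}_p^q,\norm{y}_p^q\}$: the first two obviously, and the third because $\norm{y}_p^{q-1}\norm{x}_p \le (\max\{\norm{x}_p,\norm{y}_p\})^q = \max\{\norm{x}_p^q,\norm{y}_p^q\}$. Hence $D^\psi(x,y) \le \big(\frac1q + 1 - \frac1q + 1\big)\max\{\norm{x}_p^q,\norm{y}_p^q\} = 2\max\{\norm{x}_p^q,\norm{y}_p^q\}$, which is the claim.

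I do not expect a serious obstacle here; the only things to be careful about are the degenerate case $y = 0$ (handle separately: then $\nabla\psi(y)=0$ and $D^\psi(x,0)=\frac1q\norm{x}_p^q \le \norm{x}_p^q$, which fits the bound), and making sure the exponent bookkeeping $q - p + (p-1) = q-1$ and $\tfrac{p-1}{p_\ast} \cdot$ (count of terms) is consistent so that $\norm{\nabla\psi(y)}_{p_\ast} = \norm{y}_p^{q-1}$ — both of which are already essentially verified in the $p$-norm regularity proof. An even slicker alternative, if preferred, is to invoke convexity of $\psi$ to get $D^\psi(x,y) \le \langle \nabla\psi(x) - \nabla\psi(y), x-y\rangle \le (\norm{\nabla\psi(x)}_{p_\ast} + \norm{\nabla\psi(y)}_{p_\ast})(\norm{x}_p+\norm{y}_p) = (\norm{x}_p^{q-1}+\norm{y}_p^{q-1})(\norm{x}_p+\norm{y}_p)$ and then bound each of the four resulting products by $\max\{\norm{x}_p^q,\norm{y}_p^q\}$ — but that route loses a constant factor, so I would keep the direct argument above, which lands exactly on the constant $2$.
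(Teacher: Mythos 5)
Your proposal is correct and follows essentially the same route as the paper: compute $\nabla\psi(y)$ explicitly, fold $\langle\nabla\psi(y),y\rangle=\norm{y}_p^q$ into the norm terms, bound the cross term by H\"older as $\norm{x}_p\norm{y}_p^{q-1}$, and bound the result by $2\max\{\norm{x}_p^q,\norm{y}_p^q\}$. The only (immaterial) difference is the last step: the paper converts $\norm{x}_p\norm{y}_p^{q-1}$ into the convex combination $\tfrac1q\norm{x}_p^q+\tfrac{q-1}{q}\norm{y}_p^q$ via weighted AM--GM before taking the max, while you bound it by the max directly; both land on the same constant $2$.
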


\begin{proof}
By definition, we have
\[
  D_\psi(x,y)
  ~=~
  \frac{1}{q} \norm{x}_p^q
  - \frac{1}{q} \norm{y}_p^q
  - \norm{y}_p^{q-p} \sum_i (x_i-y_i) |y_i|^{p-2} y_i
  ,
\]
which we may reorganize to
\[
  ~\le~
  \frac{1}{q} \norm{x}_p^q
  + \frac{q-1}{q} \norm{y}_p^q
  +  \norm{y}_p^{q-p} \sum_i x_i |y_i|^{p-1}
  .
\]
Seeing that sum as an inner product, by H\"older with $1 = \frac{1}{p} + \frac{1}{\frac{p}{p-1}}$, this is bounded above by
\[
  ~\le~
  \frac{1}{q} \norm{x}_p^q
  + \frac{q-1}{q} \norm{y}_p^q
  + \norm{x}_p \norm{y}_p^{q-1}
\]
and further by convexity of the exponential (i.e.\ $a^\theta b^{1-\theta} = e^{\theta \ln a + (1-\theta) \ln b} \le \theta a + (1-\theta) b$) by
\[
  ~\le~
  2 \left(
    \frac{1}{q} \norm{x}_p^q
    + \frac{q-1}{q} \norm{y}_p^q
  \right)
  .
\]
The result follows by observing that a convex combination is bounded by the maximum.
\end{proof}

\section{Application for l1 norm}
\label{appendix-p=1}
For the $\ell_1$ norm, we can apply our algorithm for $\ell_q$ norm with $q=1+s$ and use the equivalence of norms :
\begin{align*}
D_F(x,y) \leq \frac{L}{\kappa}\|x-y \|_1^{\kappa} \leq \frac{Ld^{\kappa(1- \frac{1}{q})}}{\kappa }\|x-y \|_q^{\kappa} = \frac{Ld^{\frac{\kappa s}{s+1}}}{\kappa }\|x-y \|_q^{\kappa} 
\end{align*}
Since $F$ is $(Ld^{\frac{\kappa s}{s+1}},\kappa)$-weakly-smooth with respect to the $q$-norm, we can pick $\psi(x) = \frac{1}{2}\|x\|_q^2$ as the distance generating function; it is strongly convex with respect to the $q$-norm.

Now we have a weakly smooth function and a strongly convex distance generating function with respect to $q$-norm, our algorithms leads to the following precision \[  \mathcal{O}_{\kappa} \left( d^{\frac{2\kappa s}{(s+1)(3\kappa-2)}} \left(\frac{L \tau^{2} R^\kappa}{\epsilon}\right)^{\frac{2}{3\kappa - 2 }} \log^2 \Big( \frac{L \tau R}{\epsilon} \Big) \right)  \] with $s>0$ a parameter that can be chosen arbitrary small. Therefore, the dimension cost can be controlled asymptotically when $d\xrightarrow{} \infty, \epsilon \xrightarrow{} 0$.

\section{Smooth case where q=2}
\label{app:qk2}

For the completeness of the theory, in the special case where $q=\kappa=2$, we present a variation of our Theorem~\ref{thm-p>2} here. We start by recalling the definitions of the standard smoothness and strong convexity. 

\begin{definition}[Smoothness]
\label{def-smmoth}
A continuously differentiable function  $F :\mathbb{R}^d  \xrightarrow{} \mathbb{R} $ is said to be $L$-smooth with respect to norm $\|\cdot \|$ if for all $x,y \in \mathbb{R}^d$
\[ | D_F(x,y) | \leq \frac{L}{2} \|x-y\|^{2}.\]
\end{definition}
\begin{definition}[Strong convexity]
\label{def-stronglyx-convex}
A continuously differentiable function  $F :\mathbb{R}^d  \xrightarrow{} \mathbb{R} $ is said to be $\mu$-strongly convex with respect to norm $\|\cdot \|$ if for all $x,y \in \mathbb{R}^d$
\[ D_F(x,y) \geq \frac{\mu}{2} \|x-y\|^{2}.\] 
\end{definition}

\begin{theorem}
\label{thm-p=2}
In the setting of Assumption~\ref{ass},  Algorithm~\ref{algo-acc} with the tuning below gives for all $t \geq 1 $
\begin{align*}
   A_t \Big( F(x_{t+1}^{ag}) - F_\star \Big) \leq  &  D_{\psi}(x_\star ,x_t) -  D_{\psi}(x_\star,x_{t+1}) + \eta_t \epsilon_t  + A_{t-1} \Big( F(x_t^{ag}) - F_\star  \Big)     
\end{align*}
where $\mathcal{O}_{q, \kappa}$ omits constants depending only on $(q,\kappa)$. This is achieved for any $\alpha > 0$ by
\begin{equation}
\begin{aligned}
\alpha_t  :=  \alpha, \quad \quad   \eta_t := \frac{\alpha t}{2\tau} , \quad \quad  \epsilon_t :=  \frac{1}{ t \eta_t }, \quad \quad A_t := \frac{\eta_t^2}{\alpha_t} .
\end{aligned}
\end{equation}

\end{theorem}

\begin{proof}
Compared to the case where $q>\kappa$, the step is the same until equation \eqref{mirror_analysis}, then the smooth analysis changes.
\begin{equation}
\begin{aligned}\label{q=k=2-part1}
 & \frac{\eta_t}{\tau} \Big( F(x_t^{md}) - F_\star \Big)  \\
{}\leq{} &  D_{\psi}(x_\star ,x_t) -  D_{\psi}(x_\star,x_{t+1}) +  \eta_t \langle \nabla F(x_t^{md}),x_t^{md} - x_t  \rangle \\
 & {}+{}\eta_t \langle \nabla F(x_t^{md}),x_{t} - x_{t+1}  \rangle  - \frac{\mu}{2} \| x_{t+1} - x_{t} \|^2.
\end{aligned}
\end{equation}
For the smooth analysis part, we still obtain :
\begin{equation}
\begin{aligned}
\alpha_t \Big( F(x_t^{md}) - F(x_{t+1}^{ag}) \Big) & \geq \mu \| x_{t}^{md} - x_{t+1}^{ag} \|^2 - \alpha_t D_F(x_{t+1}^{ag}, x_t^{md}) \\
& \geq   \mu \| x_{t}^{md} - x_{t+1}^{ag} \|^2  - \frac{L\alpha_t}{2}\| x_{t}^{md} - x_{t+1}^{ag} \|^2, 
\end{aligned}
\end{equation}
We will assume $\frac{2\mu }{L} > \alpha_t $. Similarly to the case $q >\kappa$, the smoothness of $F$ combining with the choice of $x_{t+1}^ag$  leads to
%
\begin{equation}\label{q=k=2-part2}
\begin{aligned}
& \eta_t \langle  \nabla F(x_t^{md}) , x_{t} - x_{t+1} \rangle  - \frac{\mu}{2} \| x_{t+1} - x_{t} \|^2  \\
\leq & \frac{\mu  \eta_t^{2}}{\alpha_t^{2}} \frac{\| x_{t+1}^{ag} - x_t^{md} \|^{2} }{2}  \\
\leq & \frac{\mu \eta_t^{2}}{ 2 \alpha_t^{2}} \Bigg( \frac{2\alpha_t}{2\mu - L \alpha_t} \Big( F(x_t^{md}) - F(x_{t+1}^{ag}) \Big) \Bigg) \\
= & \frac{\mu \eta_t^{2}}{  \alpha_t(2\mu - L\alpha_t) }\Big( F(x_t^{md}) - F(x_{t+1}^{ag}) \Big).  \\ \\
\end{aligned}
\end{equation}
Combining with previous equations \eqref{q=k=2-part1} \eqref{q=k=2-part2}, we know that
\begin{equation}
\begin{aligned}
 \frac{\eta_t}{\tau} \Big( F(x_t^{md}) - F_\star \Big)  & \leq{}   D_{\psi}(x_\star ,x_t) -  D_{\psi}(x_\star,x_{t+1}) +  \eta_t \langle \nabla F(x_t^{md}),x_t^{md} - x_t  \rangle \\
 & {}+{}\frac{\mu \eta_t^{2}}{  \alpha_t(2\mu - L\alpha_t) }\Big( F(x_t^{md}) - F(x_{t+1}^{ag}) \Big).
\end{aligned}
\end{equation}
which can be arranged into :
\begin{equation}
\begin{aligned}
 &  \frac{\mu \eta_t^{2}}{  \alpha_t(2\mu - L\alpha_t) }  \Big( F(x_{t+1}^{ag}) - F_\star \Big) \\
\leq  &  D_{\psi}(x_\star ,x_t) -  D_{\psi}(x_\star,x_{t+1}) + \eta_t \langle \nabla F(x_t^{md}),x_t^{md} - x_{t}  \rangle \\
 & + \Big(   \frac{\mu \eta_t^{2}}{  \alpha_t(2\mu - L\alpha_t) }  -  \frac{\eta_t}{\tau} \Big) \Big( F(x_t^{md}) - F(x_t^{ag})  \Big)  \\
& + \Big(   \frac{\mu \eta_t^{2}}{  \alpha_t(2\mu - L\alpha_t) }  -  \frac{\eta_t}{\tau} \Big) \Big( F(x_t^{ag}) - F_\star  \Big)  
.
\end{aligned}
\end{equation}
with binary search 
\begin{align*}
 \eta_t \langle \nabla F(x_t^{md}),x_t^{md} - x_{t}  \rangle 
+   \frac{\mu \eta_t^{2}}{  \alpha_t(2\mu - L\alpha_t) }  \Big( F(x_t^{md}) - F(x_t^{ag})  \Big) \leq \eta_t \epsilon_t 
\end{align*}
We write $A_t := \frac{\eta_t^2}{\alpha_t} $, then
\begin{equation}
\begin{aligned}
   A_t \Big( F(x_{t+1}^{ag}) - F_\star \Big) \leq  &  D_{\psi}(x_\star ,x_t) -  D_{\psi}(x_\star,x_{t+1}) + \eta_t \epsilon_t  + A_{t-1} \Big( F(x_t^{ag}) - F_\star  \Big)  .
\end{aligned}
\end{equation}
\end{proof}

However, when it comes to the analysis of the upper bound of the iterates, an additional assumption needs to be considered. If the following condition is not satisfied, we cannot theoretically guarantee the polynomial growth of our iterates. 
\begin{assumption}
There exists $C\geq 1$, such that for all $x \in \mathbb{R}^d$,
\[ \| \nabla \psi(x)\|_\ast \leq C \| x  \| \]
\end{assumption}

We note that this assumption is verified for $p$-norms with $1<p<2$ where we consider $\psi(x) : = \frac{\| x\|_p^2}{2}$. There the assumption is equivalent to $\| \phi(x) \|_\ast \leq C\| x\|$ with $\phi(x) := \nabla \Big( \frac{\| x\|^2}{2}\Big)$. That condition is satisfied for $C=1$. Now, we can provide the upper bound for the bounded iterates.

\begin{theorem} [case $q=\kappa=2$]
\label{thm-qk2-bounded}
We suppose that :
\[ \forall x \in \mathbb{R}^d, \quad \quad \| \nabla \psi(x)\|_\ast \leq C \| x  \| \]
Let's assume that $D_{\psi}(x_\star,x_1) \leq B $. We suppose that we are running Algorithm~\ref{algo-acc} with exponents $n_1 \geq 0,n_2 \geq 1$, such that for all $s \geq 1$ 
\begin{align*}
\max \left( \sum_{t=1}^{s} \eta_t \epsilon_t, \frac{\alpha_t}{\eta_t},  L\eta_s , \frac{L\alpha_s}{2}    \right)\leq  K^{n_1}  s^{n_2}
\end{align*}
and $K^{n_1} > B$ that might also depend on $(C, L,\tau,\mu)$. Then, there exists an upper bound for $\| x_t - x_{t}^{ag}\|$ which is polynomial in $t$.
\end{theorem}

\begin{proof} We use the result from Theorem~\ref{thm-p>2}, by telescopic sum, we know that :
\begin{align*}
D_{\psi}(x_\star,x_{\tau+1}) + A_\tau \Big( F(x_{\tau+1}^{ag}) - F(x_\star )\Big) \leq D_{\psi}(x_\star,x_1) + A_1\Big(F(x_1) - F(x_\star) \Big) + \sum_{t=1}^{\tau} (B_t + \eta_t \epsilon_t)  
\end{align*}
We recall that from the smoothness of $F$:
\begin{align*}
\| \nabla F(x_t^{md}) \|_{\ast} & \leq \frac{L}{2}\|x_t^{md} - x_{\star} \|\\
F(x_1) - F(x_\star) \leq D^{F}(x_1,x_\star) & \leq \frac{L}{2}\|x_1 - x_\star \|^{2} \leq \frac{qL}{2} B 
\end{align*}
which implies that 
\begin{align*}
\mu \| x_{\tau+1} - x_\star \|^2  &\leq B +  L B + K^{n_1} \tau^{n_2} \\
 \| x_{\tau+1} - x_\star \| & \leq \Big( \frac{B}{\mu} +  \frac{LB}{\mu}  + \frac{K^{n_1}}{\mu} \tau^{n_2} \Big)^{\frac{1}{2}} \\
 & \leq \Big( \frac{B}{\mu} +  \frac{LB}{\mu} \Big)^{\frac{1}{2}} + \frac{K^{\frac{n_1}{2}}}{\sqrt{\mu}} \tau^{\frac{n_2}{2}} 
\end{align*}
Since our setting is unconstrained, the (sub-)gradients have to cancel :
\begin{align*}
\eta_t \nabla F(x_t^{md})+   \nabla \psi(x_{t+1}) - \nabla \psi(x_t) & = 0  \\
\alpha_t \nabla F(x_t^{md}) + \phi (x_{t+1}^{ag} - x_t^{md}) & = 0 
\end{align*}
with $\phi(x) := \nabla \Big( \frac{\|x \|^2}{2}\Big)$.  With some manipulation 
\begin{align*}
\eta_t \| \nabla F(x_t^{md}) \|_\ast & \leq \| \nabla \psi(x_{t+1})\|_\ast + \| \nabla \psi(x_{t})\|_\ast  \leq C \Big( \| x_{t+1} \| + \| x_t \| \Big) \\
 & \leq C \Big( \| x_{t+1} - x_\star \| + \| x_t - x_\star \| + 2 \| x_\star \|\Big) \\
\alpha_t \| \nabla F(x_t^{md}) \|_\ast & = \| \phi (x_{t+1}^{ag} - x_t^{md}) \|_\ast  = \| x_{t+1}^{ag} - x_t^{md}\|
\end{align*}
We use the definition of $\phi$, we combine the previous step with Cauchy–Schwarz inequality
\begin{align*}
\| x_{t+1}^{ag} - x_{t}^{md} \|^2 & \leq  \langle \phi(x_{t+1}^{ag} - x_{t}^{md}) , x_{t+1}^{ag} - x_{t}^{md} \rangle \\
& \leq \alpha_t   \| \nabla F(x_t^{md}) \|_{\ast} \times \|  x_{t+1}^{ag} - x_{t}^{md} \| \\
\| x_{t+1}^{ag} - x_{t}^{md} \|   & \leq \alpha_t   \| \nabla F(x_t^{md}) \|_{\ast}
\end{align*}
From the triangle inequality, we obtain :
\begin{align*}
\| x_{t+1}^{ag} - x_\star \| & \leq \| x_{t+1}^{ag} - x_t^{md}\| + \|x_t^{md} - x_\star\|  \\
& \leq \alpha_t   \| \nabla F(x_t^{md}) \|_{\ast} + \lambda_t  \|x_t^{ag} - x_\star\|  + (1-\lambda_t) \|x_t - x_\star\| \\
& \leq \frac{C\alpha_t}{\eta_t} \Big( \| x_{t+1} - x_\star \| + \| x_t - x_\star \| + 2 \| x_\star \|\Big)+ \|x_t - x_\star\| +   \|x_t^{ag} - x_\star\|  
\end{align*}
By induction,
\begin{align*}
\| x_{\tau+1}^{ag} - x_\star \| & \leq \sum_{t=0}^{\tau} \left( \frac{C\alpha_{t+1}}{\eta_{t+1}} + \frac{C\alpha_t}{\eta_t} + 1 \right) \| x_{t+1} - x_\star \| + C \| x_\star \|\sum_{t=1}^{\tau} \frac{\alpha_t}{\eta_t} \\
& \leq C K^{n_1}\sum_{t=0}^{\tau} \left( (t+1)^{n_2} +  t^{n_2} + 1 \right) \| x_{t+1} - x_\star \| + C  K^{n_1} \| x_\star \| \tau^{n_2 }\\
\end{align*}
As we recall that \[ \| x_{t+1} - x_\star\| \leq  \Big( \frac{B}{\mu} +  \frac{LB}{\mu} \Big)^{\frac{1}{2}} + \frac{K^{\frac{n_1}{2}}}{\sqrt{\mu}} t^{\frac{n_2}{2}}, \]
we know that $\| x_{t+1}- x_\star\|$ has an upper bound polynomial in $t$. Therefore, the distance $\| x_t - x_{t}^{ag}\|$ is polynomial in $t$ as well.
\end{proof}

Combining Theorem~\ref{thm-p=2} with Theorem~\ref{thm-qk2-bounded}, we obtain the final convergence rate :

\begin{corollary}[case $q=\kappa=2$]
We suppose that :
\[ \forall x \in \mathbb{R}^d, \quad \quad \| \nabla \psi(x)\|_\ast \leq C \| x  \| \]
  If a bound $\frac{1}{\mu} D_{\psi}( x_\star ,x_1) \le  B$ is available, then the tuning of Theorem~\ref{thm-p=2} with
  $\alpha
  =
  \frac{\mu B}{L}$
  guarantees
  $F(x_{T+1}^{ag}) - F_\star
    =
    \mathcal{O}  \left(
      \frac{
        L\tau^{2}B}{T^2 } 
    \right)
    $.
The algorithm's oracle usage over $T$ iterations is upper-bounded by $\mathcal{O}(T \log(LB\tau T))$, where this bound represents the maximum number of times the oracle is called during the entire execution process.
\end{corollary}
\end{document}